\let\@@pmod\bmod
\DeclareRobustCommand{\bmod}{\@ifstar\@pmods\@@pmod}
\def\@pmods#1{\mkern4mu({\operator@font mod}\mkern 6mu#1)}
\definecolor{blue}{rgb}{0,0,1}
\definecolor{red}{rgb}{1,0,0}
\definecolor{green}{rgb}{0,.6,.2}
\definecolor{purple}{rgb}{1,0,1}
\long\def\red#1\endred{\textcolor{red}{#1}}
\long\def\blue#1\endblue{\textcolor{blue}{#1}}
\long\def\purple#1\endpurple{\textcolor{purple}{ #1}}
\long\def\green#1\endgreen{\textcolor{green}{#1}}
\newcommand{\ph}{\varphi}
\newcommand{\im}{\text{Im}}
\newcommand{\scrL}{\mathcal{L}}
\newcommand{\scrF}{\mathcal{F}}
\newcommand{\Z}{\mathbb{Z}}
\newcommand{\N}{\mathbb{N}}
\newcommand{\R}{\mathbb{R}}
\newcommand{\C}{\mathbb{C}}
\newcommand{\HH}{\mathbb{H}}
\DeclareMathOperator{\SL}{SL}
\DeclareMathOperator{\lcm}{{lcm}}
\newcommand{\sm}{\left(\begin{smallmatrix}}
\newcommand{\esm}{\end{smallmatrix}\right)}
\newcommand{\bpm}{\begin{pmatrix}}
\newcommand{\ebpm}{\end{pmatrix}}
\newtheorem{theorem}{Theorem}
\newtheorem{proposition}[theorem]{Proposition}
\newtheorem{corollary}[theorem]{Corollary}
\theoremstyle{remark}
\newtheorem{question}[theorem]{Question}
\newtheorem*{rmk}{Remark}
\numberwithin{theorem}{section}
\numberwithin{equation}{section}
\title{Analogues of the Bol operator for half-integral weight weakly holomorphic modular forms}
\author{Nikolaos Diamantis} 
\address{University of Nottingham}
\email{nikolaos.diamantis@nottingham.ac.uk}
\author{Min Lee} 
\address{University of Bristol}
\email{min.lee@bristol.ac.uk}
\author{Larry Rolen}
\address{Vanderbilt University}
\email{larry.rolen@vanderbilt.edu}
\begin{document}

\begin{abstract}
We define an analogue of the Bol operator on spaces of weakly holomorphic modular forms of half-integral weight. We establish its main properties and relation with other objects.
\end{abstract}
\maketitle

\section{Introduction} The classical ``Bol operator'' has proved a very fruitful tool in various aspects of the theory of modular forms. It provides one of the ways to address the difficulty that the derivative of a modular form is typically not modular (see \S 5 of \cite{Zagier123} for a excellent discussion of this problem). Among the applications of the Bol operator, we only point to two: Firstly, the theory of period polynomials \cite{KnoppMasterpiece} and, through it, the critical values of $L$-functions, bases of spaces of cusp forms etc. Secondly, the theory of harmonic Maass forms \cite[Ch. 5]{book} to which the Bol operator plays a fundamental role, not least because, together with the ``xi-operator" (see \eqref{xi}), they uniquely determine the harmonic Maass form.

We outline its construction in the setting we will most often be using, namely that of weakly holomorphic modular forms. 
For $N \in \N$ and $k \in \Z$, let $M_k^!(N)$ denote that space of weakly holomorphic modular forms of weight $k$ for $\Gamma_0(N)$, i.e. modular forms for which the holomorphicity condition is relaxed to include functions with poles at the cusps.
Then we set 
\begin{equation}\label{e:Bol_def}
D^{k-1}:=(2 \pi i)^{1-k} \frac{d^{k-1}}{d z^{k-1}}.
\end{equation}
This induces a map from $M_{2-k}^!(N)$ to $M_k^!(N)$ given, at the level of Fourier expansions, by
\begin{equation}\label{BolsId}D^{k-1}\left ( \sum_{n\gg-\infty}a_nq^n \right )=\sum_{n\gg-\infty}a_n n^{k-1}q^n.
\end{equation} 

This Bol operator commutes with the Hecke operators and with the Fricke involution $W_N:=\left ( \begin{smallmatrix} 0 & -1/\sqrt{N} \\
\sqrt{N} & 0 \end{smallmatrix} \right )$. 
It can be expressed as an iterated Maass raising operator \cite[Lemma~5.3]{book} 
and forms a companion to  the ``shadow operator'' on the space $H_{2-k}(\Gamma_0(N))$ of harmonic Maass forms 
\begin{equation}\label{xi}
\xi_{2-k}:=2iv^k\overline{\frac{\partial}{\partial\overline{z}}}\colon H_{2-k}(\Gamma_0(N))\rightarrow  S_k(\Gamma_0(N)), 
\end{equation}
where $v=\im(z)$. Here $S_k(\Gamma_0(N))$ stands for the space of cusp forms of weight $k$ and level $N.$
The interplay between $D^{k-1}$ and $\xi_{2-k}$ is fundamental for the  theory of harmonic Maass forms and, 
in particular, the study of mock modular forms. 
Specifically, harmonic Maass forms canonically split into two pieces, which are in turn annihilated each by one of these two operators. Thus, the two pieces of a harmonic Maass form can both be uniquely determined via positive weight (weakly holomorphic) modular forms by using both operators.

Given the importance of the ``Bol operator'' hinted above, it is natural to seek analogues in the space $M^!_k(N, \chi)$ of \emph{half-integral} weight $k$ weakly holomorphic modular forms for $\Gamma_0(N)$ and character $\chi$. In contrast to the shadow operator which exists for $k$ half-integral and behaves exactly as in integral weight, there is, as yet, no ``companion'' operator to $\xi_{2-k}$. Finding one has been a long-standing aim among researchers in the area. 
 
Results hinting in this direction have been given, however.
In \cite{BGK} a very interesting map is constructed which sends weight $2-k$ harmonic Maass forms to weight $k$ weakly holomorphic cusp forms for $k$ half-integral in a fashion that parallels the Bol operator. 
The construction is based on the Zagier lifts and appears in the context of Shintani lifts from integral weight weakly holomorphic modular forms to half-integral weight weakly holomorphic modular forms. 
Since the main aim was to ensure that the maps involved in the definition of the Shintani lift are Hecke invariant, the Bol-style map of \cite{BGK} was, in fact, a family of maps on certain individual subspaces of the space of weight $2-k$ weak Maass forms. 
However, unlike the classical Bol operator, these maps do not have a simple action on Fourier expansions, and while they have been put to good use to study $L$-values, they are in some sense more mysterious.

Against this background, in this note we investigate three questions of increasing strength and specificity
\begin{question}
\label{Question11}
Can one build an explicit analogue of the operator in \eqref{BolsId} for the entire space of half-integral weight forms?
\end{question}
A stronger form of this question is
\begin{question}
\label{Question12}
Let $k \in \frac{1}{2}+\N,$ $N \in \N$ and a Dirichlet character $\psi$ mod $N$. 
Does there exist a linear map from $M^!_{2-k}(N, \psi)$ to $M^!_{k}(N', \psi')$, for some $N' \in \N $ and a character $\psi' \bmod N'$, sending each
\begin{equation}\label{FourExmg}
f(z)=\sum_{n\ge -n_0}c_n q^n \in M^!_{2-k}(N, \psi)
\end{equation}
to a $f_1 \in M^!_{k}(N', \psi')$ which has the form
\begin{equation}\label{Q2}f_1(z)=\sum_{n\ge -n_0}\left ( c_n \ell (n)  n^{k-1}+ \text{``lower order terms''} \right )q^n
\end{equation}
  for an explicit, bounded map $\ell \colon \Z \to \C$ independent of $f$ and, as ``lower order terms'', some linear combinations of $\{c_{-n_0}, \dots, c_{n-1}\}$ with coefficients independent of $f$? 
\end{question}
An even stronger version of the question, dispenses with the ``lower order terms'' in the $n$-th Fourier coefficient of \eqref{Q2}
\begin{question}
\label{Question13}
Let $k \in \frac{1}{2}+\N,$ $N \in \N$ and a Dirichlet character $\psi$ mod $N$. 
Does there exist a linear map from $M^!_{2-k}(N, \psi)$ to $M^!_{k}(N', \psi')$, for some $N' \in \N $ and a character $\psi' \bmod N'$, sending each $f \in M^!_{2-k}(N, \psi)$ with Fourier expansion \eqref{FourExmg}
to a $f_1 \in M^!_{k}(N', \psi')$ of form
\begin{equation}\label{Q3}f_1(z)=\sum_{n\ge -n_0} c_n \ell (n)  n^{k-1}q^n
\end{equation}
  for an explicit, bounded map $\ell \colon \Z \to \C$ independent of $f$?
\end{question}

We will give an affirmative answer to Questions \ref{Question11} and \ref{Question12} by defining a family of Bol-style maps on the entire space $M^!_{2-k}(N, \psi)$ yielding elements of $M^!_{k}(N, \psi')$ with a Fourier expansion of the form \eqref{Q2}. 
For the construction, the relation between the theta functions 
$$
\theta_0(z):=\sum_{n\geq1}\psi_0(n)q^{n^2} 
\qquad \text{and} \, \, \, \theta_1(z):=\sum_{n\geq1}n\psi_1(n)q^{n^2},$$
for suitable characters $\psi_0, \psi_1$, is used as a prototype of a Bol-style operator of weight $1/2$ (see Prop. \ref{Qu13}) and this is reflected in the structure of the formula for our operators. Indeed, in addition to addressing Questions \ref{Question11} and \ref{Question12}, they map $\theta_0$ to $\theta_1$. Further, our operators are derived by a process reminiscent of (group-)conjugating an integral weight Bol operator by elements of the algebra generated by $\theta_0, \theta_1$. 

The use of theta functions in our construction is similar in spirit with the special case of the Shimura lift that originated with Selberg and, later on, extended by Cipra and Hansen-Naqvi \cite{C1, HN}. 
In their setting too, the theta function is used to ``complete'' the degree to an integer. A further similarity is that both their Shimura lift and our Bol-style operator are explicitly identified and, in the negative direction, that neither their Shimura lift nor our operator are compatible with the Hecke action. 
Finally, there is no basis to ask whether their version of the Shimura lift is compatible with our Bol-style operator and the classical Bol operator, because their lift, by construction, is only definable for positive weights.

It is unclear whether the answer to Question \ref{Question13} is affirmative and the main supportive evidence is, on the one hand, the analogy with the integral weight case and, on the other, the special case of the pair $\theta_0, \theta_1$ in weights $1/2$ and $3/2$. A positive answer would be important, not just because the resulting construction will be simpler but, mainly, because a function such as \eqref{Q2} will be more likely to be compatible with the Hecke action and with the analogue of the Shimura lift for weakly holomorphic modular forms. In the last section of the note, we propose an approach towards this question which is based on the direct and converse theorems proved in our recent work \cite{DLRR}.  

\section*{Acknowledgements} The first author is partially supported by EPSRC grant EP/S032460/1. The second author was supported by Royal Society University Research Fellowship ``Automorphic forms, L-functions and trace formulas''. The third author is grateful for support from a grant from the Simons Foundation (853830, LR), support from a Dean’s Faculty Fellowship from Vanderbilt University. Part of the work was done while the first and third author were visiting Max Planck Institute for Mathematics in Bonn, whose hospitality they acknowledge.

\section{Background.} 
\subsection{Basic notation} We recall the slash action $|_k$ of $\SL_2(\R)$ on smooth functions $f\colon \HH \to \C$ on the complex upper half-plane $\HH$, in the cases $k \in \Z$ and $k \in \frac{1}{2}+\Z$:

{\bf $\bullet$ $k \in \Z$.} We consider the action $|_k$ of $\SL_2(\R)$ on smooth functions $f\colon \HH \to \C$ on the complex upper half-plane $\HH$, given by 
\begin{equation}\label{e:slashk_def}
(f|_k\gamma)(z):= (cz+d)^{-k} f(\gamma z), \qquad \text{for $\gamma=\bpm a & b \\  c & d \ebpm \in$ SL$_2(\R)$}.
\end{equation} 
Here $\gamma z = \frac{az+b}{cz+d}$ is the M\"obius transformation.  

{\bf $\bullet$ $k\in \frac{1}{2}+\Z$.} Here and throughout, we set the implied logarithm to equal its principal branch so that $-\pi <$arg$(z) \le \pi$.   
If $\left ( \frac{c}{d} \right )$ denotes the Kronecker symbol, we set, for an odd integer $d$,
\begin{equation}
\epsilon_d:=\begin{cases} 1 & \text{ if } d\equiv 1\bmod{4}, \\
i & \text{ if } d\equiv 3\bmod{4}, 
\end{cases} 
\end{equation}
so that $\epsilon_d^2 = \left(\frac{-1}{d}\right)$. 
We define the action $|_k$ of $\Gamma_0(N)$, for $4|N$, on smooth functions $f\colon \HH \to \C$ as follows: 
\begin{equation}\label{e:slashk_halfint}
(f|_k\gamma)(z):= 
\left ( \frac{c}{d} \right ) \epsilon_d^{2k} (cz+d)^{-k} f(\gamma z) \qquad \text{ for all } \gamma=\bpm * & * \\  c & d \ebpm  \in \Gamma_0(N).
\end{equation}

Let $W_M=\sm 0 & -\sqrt{M}^{-1} \\ \sqrt{M} & 0\esm$ for $M\in \mathbb{N}$. 
For both $k \in \frac12\Z$ we have
\begin{equation}\label{WNinC}
(f|_k W_M)(z)
=f(W_Mz) (\sqrt M z)^{-k}. 
\end{equation}
Note that we define the action $|_k W_M$ by the same formula in both the integral and half-integral weight cases. 

For each $N \in \mathbb N,$ $k \in \frac{1}{2}\Z$, and Dirichlet character $\psi \bmod N$, we denote by $M_k(N, \psi)$ (resp. $M^!_k(N, \psi)$) the space of holomorphic (resp. weakly holomorphic) modular forms for $\Gamma_0(N)$ and character $\psi$. The subspace of $M_k(N, \psi)$ (resp. $M^!_k(N, \psi)$ consisting of forms such that the constant term of its Fourier expansion at each cusp vanishes is denoted by $S_k(N, \psi)$ (resp. $S^!_k(N, \psi)$ and called the space of holomorphic (resp. weakly holomorphic) cusp forms for $\Gamma_0(N)$ and character $\psi$. The absence of $\psi$ from the notation means that the implied character is the trivial one.

\subsection{$L$-series}
Following \cite{DLRR}, we associate an $L$-series to each element of the above spaces. 
As discussed in \cite{DLRR}, in the case of holomorphic cusp forms, these $L$-series are equivalent with the classical $L$-series. 
 
First, for each $f: \HH \to \C$, holomorphic in $\HH$ with a Fourier expansion of the form 
\begin{equation}\label{FourExmgGene}
f(z)=\sum_{n\ge -n_0}c_n e^{2 \pi i n z/M}
\end{equation}
for some $M \in \N$ and $n_0 \in \Z.$, we let $\mathcal F_f$ be the set of piecewise smooth complex functions $\varphi$ on $\R$ such that the series
\[\sum_{\substack{n \ge -n_0}} |c_n| (\scrL|\varphi|)\left (2 \pi n/M\right )\] 
converges, where $(\scrL \varphi)(s):=\int_0^{\infty} e^{-s t} \varphi(t)dt $ is the Laplace transform. 

Let now $ k \in \frac{1}{2} \Z$, $N \in \N$ and a Dirichlet character $\psi \bmod N.$ If the Fourier expansion of $f \in M^!_{2-k}(N, \psi)$ has the form \eqref{FourExmg} with $M=1$, we define the $L$-series of $f$ to be the map
$L_f\colon \mathcal F_f \to \C$  given by
\[
L_f(\ph):=\sum_{n \ge -n_0} c_n (\scrL \ph)(2 \pi n).
\]

We further consider the twists $f_\chi(z)$ given by 
\[
f_\chi(z): = \sum_{n\geq -n_0} c_n\tau_{\bar{\chi}}(n) e^{2\pi n\frac{z}{D}},
\]where for a Dirichlet character $\chi$ modulo $D$ and an $n \in \Z$, the {\it generalized Gauss sum} is 
\[
\tau_{\chi}(n):=\sum_{u \bmod D} \chi(u)e^{2 \pi i n\frac{u}{D}}.
\]
The $L$-function of $f_{\chi}$ is defined by
\[
L_f(\chi, \ph)=L_{f_{\chi}}(\ph):=\sum_{n \ge -n_0} c_n\tau_{\bar{\chi}}(n) (\scrL \ph)(2 \pi n/D)
\]
for each $\ph \in \scrF_{f_{\chi}}$.

One of the main advantages of this distributional-type setup is that it was used in  \cite{DLRR} to prove a Weil-type converse theorem.
To state the direct and converse theorems for our $L$-series, we set, for each $\varphi: \R^+\to \C$
\begin{equation}\label{e:testft_kWM_def}
(\varphi|_k W_M)(x):= \varphi((Mx)^{-1}) (Mx)^{-k}. 
\end{equation}

We recall the following theorem from \cite{DLRR}.
\begin{theorem}\label{DThalf}
Fix $k\in \frac{1}{2}\Z$. 
Let $N\in \N$ and let $\psi$ be a Dirichlet character modulo $N$.
When $k\in \frac{1}{2}+\Z$, assume that $4|N$. 
Suppose that $f$ is an element of $M^!_k(N, \psi)$ with expansion \eqref{FourExmg} and that $\chi$ is a 
character modulo $D$ with $(D, N)=1$. 
Set $g:=f|_kW_{N}$
and 
\begin{equation}\label{e:scrF_fg}
\scrF_{f, g} := \bigcap\limits_{\chi \bmod D}\left\{\varphi\in \scrF_{f_{\chi}}   
\;:\; \varphi|_{2-k} W_N \in 
\scrF_{g_{\chi}}  \right\}.  
\end{equation}
Then $\scrF_{f, g}\neq \{0\}$ and we have the following functional equations. 
For each $\varphi\in \scrF_{f, g}$, if $k\in \Z$, 
\begin{equation}\label{e:FEN0}
L_{f}(\chi, \ph) =i^k
\frac{\chi(-N) \psi(D)}{N^{k/2-1}} L_{g}(\bar \chi, \ph|_{2-k}W_N),
\end{equation}
For each $\varphi\in \scrF_{f, g}$, if $k\in \frac{1}{2}+\Z$, 
\begin{equation}\label{FENhalf}
L_{f}(\chi, \ph)  = i^k \psi_D(-1)^{k-\frac{1}{2}} \psi_D(N) 
\frac{\chi(-N) \psi(D)}{\epsilon_D N^{k/2-1}} L_{g}(\bar{\chi}\psi_D, \ph|_{2-k}W_N).
\end{equation}
Here $\psi_D(u) = \left(\frac{u}{D}\right)$ is the real Dirichlet character modulo $D$, given by the Kronecker symbol.
\end{theorem}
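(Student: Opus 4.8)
The plan is to realise each (twisted) $L$-series as an integral of the corresponding form along the positive imaginary axis against the test function, and then to convert the Fricke transformation law of the form into the functional equation by the substitution $t\mapsto(Nt)^{-1}$. The point of the distributional set-up is that this argument requires no analytic continuation, which is unavailable for weakly holomorphic $f$ (whose completed $L$-function has no meromorphic continuation). First I would record the basic identity: since $(\scrL\ph)(2\pi n/D)=\int_0^\infty e^{-2\pi nt/D}\ph(t)\,dt$ and, by \eqref{FourExmg}, $f_\chi(it)=\sum_{n\ge -n_0}c_n\tau_{\bar\chi}(n)e^{-2\pi nt/D}$, interchanging summation and integration gives
\[
L_f(\chi,\ph)=\int_0^\infty f_\chi(it)\,\ph(t)\,dt,
\]
and likewise $L_g(\,\cdot\,,\ph|_{2-k}W_N)=\int_0^\infty g_{\cdot}(it)\,(\ph|_{2-k}W_N)(t)\,dt$. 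The interchange is legitimate precisely when $\ph\in\scrF_{f_\chi}$ and $\ph|_{2-k}W_N\in\scrF_{g_\cdot}$, which is why membership in $\scrF_{f,g}$ as in \eqref{e:scrF_fg} is imposed: the principal parts force $f_\chi(it)$ to grow as $t\to\infty$ and $g(it)$ to grow as $t\to 0$, so both endpoints must be controlled at once.

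The arithmetic core is then a single twist--Fricke commutation identity. Writing out the Gauss sum one has $f_\chi(z)=\sum_{u\bmod D}\bar\chi(u)\,f\big((z+u)/D\big)$, so $f_\chi$ is a finite $\C$-linear combination of additive translates of $f$ and hence a weakly holomorphic form on a congruence group of level $ND^2$. I would prove, with $g=f|_kW_N$,
\[
f_\chi|_kW_N=\chi(-N)\,\psi(D)\,g_{\bar\chi}\qquad(k\in\Z),
\]
the factor $\chi(-N)$ coming from reindexing the Gauss sum and $\psi(D)$ from the nebentypus of $g$ on the scaling matrix; in the half-integral case the uniform formula \eqref{WNinC} for $|_kW_N$ still applies, but relating the Fourier expansion of $f_\chi$ to that of $g_{\bar\chi}$ forces the quadratic multiplier $\left(\frac{c}{d}\right)\epsilon_d^{2k}$ through the computation, producing the replacement $\bar\chi\mapsto\bar\chi\psi_D$ and the constant $\psi_D(-1)^{k-\frac12}\psi_D(N)/\epsilon_D$, so that $f_\chi|_kW_N=\psi_D(-1)^{k-\frac12}\psi_D(N)\frac{\chi(-N)\psi(D)}{\epsilon_D}\,g_{\bar\chi\psi_D}$.

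With this identity in hand the functional equation is a change of variables. Substituting $t\mapsto(Nt)^{-1}$ in the integral for $L_f(\chi,\ph)$ and using $W_N\cdot(it)=i/(Nt)$ together with \eqref{WNinC} turns $f_\chi(i/(Nt))$ into $(\sqrt N\,it)^k$ times the right-hand side of the commutation identity evaluated at $it$; the Jacobian factor $(Nt)^{-1}\,dt$ combines with $(\sqrt N\,it)^k$ to give $i^kN^{1-k/2}$ and, by the defining formula \eqref{e:testft_kWM_def}, the surviving $\ph(1/(Nt))\,t^{k-2}$ reassembles exactly into $N^{2-k}(\ph|_{2-k}W_N)(t)$. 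Collecting $i^kN^{1-k/2}=i^k/N^{k/2-1}$ with the character constant then yields \eqref{e:FEN0} and \eqref{FENhalf}. For the non-triviality $\scrF_{f,g}\neq\{0\}$ I would exhibit smooth $\ph$ supported in a compact subinterval of $(0,\infty)$: then $\scrL|\ph|$ decays exponentially along $2\pi n/D\to+\infty$ and hence dominates the sub-exponential growth $|c_n|\ll e^{C\sqrt n}$ of the coefficients of $f$, $g$ and all their twists simultaneously, while the finitely many principal-part terms cause no trouble; the image $\ph|_{2-k}W_N$ is again supported away from $0$ and $\infty$, so the same bound applies.

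I expect the main obstacle to be establishing the commutation identity in the half-integral case, i.e. correctly tracking how the automorphy factor $\left(\frac{c}{d}\right)\epsilon_d^{2k}$ propagates through the Gauss-sum expansion under the Fricke involution and extracting the exact normaliser $\psi_D(-1)^{k-\frac12}\psi_D(N)/\epsilon_D$ together with the character shift $\bar\chi\mapsto\bar\chi\psi_D$. This is a delicate computation with quadratic Gauss sums and the theta multiplier, relying on $(D,N)=1$ and on quadratic reciprocity to reconcile the Kronecker symbols at the two cusps; by contrast, the integral representation and the final change of variables are routine once convergence on $\scrF_{f,g}$ is secured.
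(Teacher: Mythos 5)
The paper itself contains no proof of this statement: Theorem \ref{DThalf} is recalled verbatim from \cite{DLRR}, so the only comparison available is with the argument given there. Your proposal follows essentially that argument --- realise $L_f(\chi,\varphi)=\int_0^\infty f_\chi(it)\,\varphi(t)\,dt$ by Fubini (which is exactly what membership in $\scrF_{f,g}$ licenses, and which smooth compactly supported $\varphi$ satisfy, giving $\scrF_{f,g}\neq\{0\}$), establish the twist--Fricke commutation $f_\chi|_kW_N=\chi(-N)\psi(D)\,g_{\bar\chi}$ (with the shift $\bar\chi\mapsto\bar\chi\psi_D$ and the extra factor $\psi_D(-1)^{k-\frac12}\psi_D(N)/\epsilon_D$ in half-integral weight), and substitute $t\mapsto (Nt)^{-1}$ --- and your constant bookkeeping is consistent with \eqref{e:FEN0} and \eqref{FENhalf}. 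The one step you leave schematic, the half-integral multiplier computation with the quadratic Gauss sums, is indeed the technical core, but the constant and character shift you posit are precisely the ones forced by your (correct) reduction of the functional equation to that commutation identity, so the outline is sound and matches the source proof.
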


Note that the factor in \eqref{FENhalf} differs from \cite{DLRR} due to our different normalisation of $f|_kW_N$. 
We now recall the converse of Theorem
\ref{DThalf} from \cite{DLRR}. 
\begin{theorem}\label{thm:CT1}
Let $N$ be a positive integer and $\psi$ a Dirichlet character modulo $N$. 
For $j\in \{1, 2\}$ and some integer $n_0$, let $(a_j(n))_{n\geq -n_0}$ be a sequence of complex numbers such that 
$a_j(n)=O\left(e^{C\sqrt{|n|}}\right)$ as $|n|\to \infty$ for some constant $C>0$. 
Define holomorphic functions $f_j\colon \HH\to \C$ by
$
f_j(z) := \sum_{n\geq -n_0} a_j(n) e^{2\pi i n z}.
$

For all $D\in \{1, 2, \ldots, N^2-1\}$, $\gcd(D, N)=1$, Dirichlet character $\chi$ modulo $D$ and any smooth, compactly supported $\varphi: \R_+ \to \C$, assume that
\[
L_{f_1}(\chi, \varphi)
= i^k \frac{\chi(-N)\psi(D)}{N^{\frac{k}{2}-1}} 
L_{f_2}( \overline{\chi}, \varphi|_{2-k}W_N) 
\]
if $k\in \Z$, and 
\[
L_{f_{1}}(\chi, \varphi)
= i^k\psi_D(-1)^{k-\frac{1}{2}} \psi_D(N) 
\frac{\chi(-N) \psi(D)}{\epsilon_D N^{\frac{k}{2}-1}} 
L_{f_2}(\overline{\chi}\psi_D, \varphi|_{2-k}W_N)
\]
if $k\in \frac{1}{2}+\Z$. 
Then, the function $f_1$ is a weakly holomorphic modular form with weight $k$ and Nebentypus character $\psi$ for $\Gamma_0(N)$ and $f_2=f_1|_k W_N$. 
\end{theorem}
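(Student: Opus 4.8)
The plan is to follow Weil's converse-theorem strategy, adapted to the distributional $L$-series of \cite{DLRR}. The starting observation is that, for $f$ with Fourier expansion \eqref{FourExmg} and any admissible (here smooth, compactly supported) $\varphi$, the defining series collapses into an integral along the imaginary axis,
\[
L_f(\varphi)=\sum_{n\ge -n_0}c_n(\scrL\varphi)(2\pi n)=\int_0^\infty f(it)\,\varphi(t)\,dt,
\]
the interchange being harmless for compactly supported $\varphi$; likewise $L_f(\chi,\varphi)=\int_0^\infty f_\chi(it)\,\varphi(t)\,dt$. Thus each hypothesised functional equation becomes an identity between integrals of $f_1$ and $f_2$ tested against all $\varphi$.

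First I would treat the untwisted identity ($D=1$, $\chi$ trivial, so $f_\chi=f$). Writing $(\varphi|_{2-k}W_N)(t)=\varphi((Nt)^{-1})(Nt)^{k-2}$ and substituting $t\mapsto(Ns)^{-1}$ in the right-hand integral, one recognises the M\"obius image $i/(Ns)=W_N(is)$ and re-expresses the right-hand side through $(f_2|_kW_N)(is)$ via \eqref{WNinC}. The automorphy factors and the constant in the functional equation combine, using $W_N^2=-I$, to give
\[
\int_0^\infty f_1(it)\,\varphi(t)\,dt=\int_0^\infty (f_2|_kW_N)(it)\,\varphi(t)\,dt\qquad\text{for all }\varphi .
\]
Since a function holomorphic on $\HH$ is determined by its restriction to the imaginary axis, and the test functions separate points there, this forces $f_2=f_1|_kW_N$, which is the transformation law of $f_1$ under $W_N$.

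Next I would exploit the twisted functional equations to reach the full group $\Gamma_0(N)$. Unfolding the generalised Gauss sum gives $f_\chi(z)=\sum_{u\bmod D}\overline{\chi}(u)\,f\!\left(\tfrac{z+u}{D}\right)$, so $f_\chi$ is a finite combination of $D$-scaled translates of $f$. Running the same $W_N$-substitution on the functional equation for the pair $(f_{1,\chi},f_{2,\overline{\chi}})$---with $\overline{\chi}$ replaced by $\overline{\chi}\psi_D$ and the extra factor $\psi_D(-1)^{k-\frac12}\psi_D(N)/\epsilon_D$ inserted in the half-integral case---converts it into a transformation law for $f_1$ under the element of $\Gamma_0(N)$ obtained by conjugating $W_N$ by the scaling $z\mapsto(z+u)/D$, whose lower-left entry is controlled by $D$. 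The decisive input is then Weil's lemma: as $D$ ranges over the residues in $\{1,\dots,N^2-1\}$ coprime to $N$ and $\chi$ over characters modulo $D$, these matrices, together with the translation $z\mapsto z+1$ (automatic from the $q$-expansion) and $W_N$, generate all of $\Gamma_0(N)$, whence $f_1|_k\gamma=f_1$ for every $\gamma\in\Gamma_0(N)$. Finally, the finite principal parts (as $n\ge-n_0$) together with the subexponential bound $a_j(n)=O(e^{C\sqrt{|n|}})$ ensure convergence on $\HH$ and at-most-polar growth at the cusps $\infty$ and $0$, which modularity propagates to every cusp; hence $f_1$ is \emph{weakly} holomorphic of weight $k$ and Nebentypus $\psi$ for $\Gamma_0(N)$, with $f_2=f_1|_kW_N$.

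I expect the main obstacle to be the group-generation step together with its interaction with the half-integral multiplier. Establishing that the finite range $D<N^2$ already suffices to generate $\Gamma_0(N)$ is the analytic heart of the argument, since one must control precisely which matrices are produced and verify an effective generation bound. In the half-integral case one must additionally check that the prefactor $\psi_D(-1)^{k-\frac12}\psi_D(N)/\epsilon_D$, produced by the theta-type multiplier under $W_N$, reassembles exactly into the automorphy factor $\left(\frac{c}{d}\right)\epsilon_d^{2k}$ attached to each generator; this bookkeeping of Kronecker symbols and $\epsilon$-factors is where the half-integral subtleties are concentrated.
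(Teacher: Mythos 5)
First, a point of comparison that matters here: this paper does not prove Theorem \ref{thm:CT1} at all; it is recalled from \cite{DLRR} (``We now recall the converse of Theorem \ref{DThalf} from \cite{DLRR}''), so your attempt can only be judged against what such a proof must contain. Your overall architecture is the right one, and is certainly the skeleton any proof of this statement follows: the identity $L_f(\chi,\varphi)=\int_0^\infty f_\chi(it)\,\varphi(t)\,dt$ for smooth compactly supported $\varphi$, the deduction from the untwisted equation that $f_1$ and $f_2$ are exchanged by $W_N$ (your displayed identity is off by a factor $(-1)^k$ --- the substitution actually gives $f_1=(-1)^k f_2|_kW_N$ --- but the conclusion $f_2=f_1|_kW_N$ survives after applying $W_N$ and $W_N^2=-I$), the passage from multiplicative to additive twists by Fourier inversion on $(\Z/D\Z)^\times$ (which is exactly why the hypothesis includes imprimitive characters), and a final generation argument for $\Gamma_0(N)$.

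The genuine gap is that the two decisive steps are named but not carried out, and one of them is misstated. (i) The matrix mechanism: conjugating $W_N$ by the scaling $z\mapsto(z+u)/D$ does \emph{not} produce an element of $\Gamma_0(N)$ --- a direct computation gives $\frac{1}{\sqrt N}$ times a matrix with entries like $u\sqrt N$, which is not even in $\SL_2(\Z)$. The correct device is conjugation by the Fricke involution $W_{ND^2}$ of the \emph{twisted} level (e.g.\ $W_{ND^2}T_{u/D}W_{ND^2}^{-1}\in\Gamma_0(N)$), or equivalently matrix identities coupling two \emph{different} additive twists $u,v$ with $Nuv\equiv -1\pmod{D}$; setting this up compatibly with the paper's normalisation $\varphi|_{2-k}W_N$ of test functions is precisely the work to be done. (ii) The generation statement --- that the matrices so obtained for $D\le N^2-1$, $(D,N)=1$, together with $T$, $-I$ and $W_N$, generate $\Gamma_0(N)$ with the multipliers matching --- is not ``Weil's lemma''. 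Weil's classical argument requires twists to arbitrarily large prime moduli (via Dirichlet's theorem) plus a lemma on invariance under a hyperbolic element, neither of which is available when the hypothesis only supplies $D<N^2$; the effective finite-range generation lemma is exactly the non-classical content of this converse theorem, and you explicitly defer it (``the analytic heart'') rather than prove it. The same goes for the half-integral bookkeeping: checking that the factors $\psi_D(-1)^{k-\frac12}\psi_D(N)/\epsilon_D$ reassemble into the multiplier $\left(\frac{c}{d}\right)\epsilon_d^{2k}$ of each generator is deferred, yet it is where such arguments most often break. As it stands, the proposal is a correct strategic outline, not a proof.
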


\section{Analogues of the Bol operator}
\subsection{Theta series of weight $1/2$ and $3/2$}
Before beginning our main construction, we first recall the classical unary theta functions \cite{Sh}.
Let $N_0, N_1$ two positive integers. 
Fix an even $\psi_0 \bmod N_0$ and 
an odd character $\psi_1 \bmod N_1$.
Set 
\begin{equation} \label{t0}
\theta_0(z):=\sum_{n\geq0}\psi_0(n)q^{n^2} 
\end{equation}
and 
\begin{equation} \label{t1}
\theta_1(z):=\sum_{n\geq1}n\psi_1(n)q^{n^2}. 
\end{equation}
For computational convenience, we take $\psi_0(0)$ to be $1/2$, $\psi_0$ is the trivial character. 
By \cite[Section 2]{Sh}, we have that $\theta_0$
is a modular form of weight $1/2$, level $4N_0^2$ and character $\psi_0$. 
Also, $\theta_1$ is a modular form of weight $3/2$, level $4N_1^2$ and character $\psi_1 \left ( \frac{-1}{\cdot}\right )$. 
If, in addition, $\psi_0$ (resp. $\psi_1$) are real and primitive, we have the transformation equations
\begin{align}
& \theta_0|_{1/2}W_{4N_0^2}=(iN_0)^{-1/2} \tau(\psi_0)\theta_0; \label{WN0}
\\ & \theta_1|_{3/2}W_{4N_1^2}=-(iN_1)^{-1/2} \tau(\psi_1)\theta_1, \label{WN1}
\end{align}
where, for $i=0, 1$, 
\[
\tau(\psi_i):=\sum_{u \bmod N_i} \psi_i(u)e^{2 \pi i \frac{u}{N_i}}.
\]

\begin{rmk}
The primitivity of $\psi_0, \psi_1$ is needed only for \eqref{WN0}, \eqref{WN1} not for the modularity of $\theta_0, \theta_1$.
\end{rmk}

Recall that $\psi_0$ is a real, even, primitive Dirichlet character modulo $N_0$. 
In the next proposition, we will answer Question \ref{Question13} in the case of the subspace $M_{1/2}(4N_0^2, \psi_0)$ of $M_{1/2}^!(4N_0^2, \psi_0)$. Specifically, we will define a map such as the one posited in Question \ref{Question13}. 
Any $f\in M_{1/2}(4N_0^2, \psi_0)$ with Fourier expansion $f(z) = \sum_{n\geq 0} a(n) q^n$, is mapped to the function
\begin{equation}
(\delta^{\frac{3}{2}-1} f)(z) 
:= \sum_{n\geq 0} a(n) \ell(n) n^{\frac{3}{2}-1} q^n 
\end{equation}
with $\ell: \N_0 \to \R$ given by
\begin{equation}
\ell(n): = \begin{cases}
\left(\frac{-1}{\sqrt{n}} \right) & \text{ when } \sqrt{n} \in \mathbb{N}, \\
0 & \text{ otherwise. } 
\end{cases} 
\end{equation}
Now we will prove that the image of the map $\delta^{\frac{3}{2}-1}$ is contained in $M_{3/2}(64N_0^2, \psi_0)$.
\begin{proposition}\label{Qu13}  
The answer to Question \ref{Question13} is positive in $M_{1/2}(4N_0^2, \psi_0)$, for $\psi_0 \mod N_0$ real, even and primitive.
\end{proposition}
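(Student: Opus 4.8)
The plan is to reduce to unary theta series via the Serre--Stark basis theorem, and then to follow the action of $\delta^{\frac{3}{2}-1}$ explicitly on each basis element.

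First I would invoke the Serre--Stark basis theorem: the finite-dimensional space $M_{1/2}(4N_0^2, \psi_0)$ is spanned by unary theta series $\theta_{\psi, t}(z) := \sum_{n\geq 0}\psi(n)q^{tn^2}$, where $\psi$ ranges over even primitive characters of some conductor $r$ and $t$ over positive integers subject to the level condition $r^2 t \mid N_0^2$ and the nebentypus condition $\psi(\cdot)\left(\frac{t}{\cdot}\right) = \psi_0$ (here $\theta_0 = \theta_{\psi_0, 1}$ is the case $\psi = \psi_0$, $t = 1$). Since both the map $\delta^{\frac{3}{2}-1}$ and the assertion to be proved are $\C$-linear in the input, it suffices to establish the claim on each such basis element.

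Next I would use that $\delta^{\frac{3}{2}-1}$ annihilates every Fourier coefficient indexed by a non-square. The exponents appearing in $\theta_{\psi,t}$ are the integers $tn^2$, and for $n\neq 0$ the integer $tn^2$ is a perfect square exactly when $t$ is. Hence, if $t$ is not a square then $\delta^{\frac{3}{2}-1}\theta_{\psi,t}=0$ and there is nothing to prove. If $t=s^2$, then writing the surviving index as $(sn)^2$ and using $\ell((sn)^2)=\left(\frac{-1}{sn}\right)$, $((sn)^2)^{\frac{3}{2}-1}=sn$, together with multiplicativity $\left(\frac{-1}{sn}\right)=\left(\frac{-1}{s}\right)\left(\frac{-1}{n}\right)$, a direct computation on Fourier expansions gives
\begin{equation*}
\delta^{\frac{3}{2}-1}\theta_{\psi,s^2}(z)=s\left(\tfrac{-1}{s}\right)\sum_{n\geq 0}\psi'(n)\,n\,q^{(sn)^2},\qquad \psi':=\psi\left(\tfrac{-1}{\cdot}\right),
\end{equation*}
which is a scalar multiple of a weight-$3/2$ unary theta series of the $\theta_1$-type, with odd base character $\psi'$ and inner scaling $s^2$.

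It then remains to identify the level and nebentypus of this weight-$3/2$ theta series. By the transformation theory of \cite{Sh} (the same computation that produces the stated nebentypus $\psi_1\left(\frac{-1}{\cdot}\right)$ of $\theta_1$), the series $\sum_n \psi'(n)\,n\,q^{s^2 n^2}$ has nebentypus
\begin{equation*}
\psi'(\cdot)\left(\tfrac{-s^2}{\cdot}\right)=\psi(\cdot)\left(\tfrac{-1}{\cdot}\right)^2\left(\tfrac{s^2}{\cdot}\right)=\psi(\cdot)\left(\tfrac{t}{\cdot}\right)=\psi_0,
\end{equation*}
where the final equality is exactly the nebentypus condition already satisfied by $\theta_{\psi,t}$. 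For the level, the conductor $r'$ of $\psi'$ divides $\mathrm{lcm}(r,4)\mid 4r$, so the level $4(r')^2 s^2$ of the image divides $64\,r^2 s^2 = 16\,(4 r^2 t)$, which divides $64 N_0^2$. Thus $\delta^{\frac{3}{2}-1}\theta_{\psi,s^2}\in M_{3/2}(64N_0^2,\psi_0)$, and the proposition follows by linearity.

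I expect the crux to be the nebentypus bookkeeping in the last step: one must check that twisting the base character by the odd character $\left(\frac{-1}{\cdot}\right)$ in passing from weight $1/2$ to weight $3/2$, and then reading off the weight-$3/2$ nebentypus, returns $\psi_0$ itself rather than a quadratic twist of it, while simultaneously the conductor inflation by the modulus-$4$ character $\left(\frac{-1}{\cdot}\right)$ accounts for precisely the factor $16$ (beyond the base factor $4$) yielding the level $64N_0^2$. The identity $\delta^{\frac{3}{2}-1}\theta_0=\theta_1$ (the case $s=1$, $\psi=\psi_0$) is the reassuring check that these constants come out correctly.
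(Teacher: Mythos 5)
Your proposal is correct, and its skeleton coincides with the paper's: reduce to the Serre--Stark basis $\{\theta_{\psi,t}\}$ of $M_{1/2}(4N_0^2,\psi_0)$, use linearity of $\delta^{\frac{3}{2}-1}$, and observe that every $\theta_{\psi,t}$ with $t$ non-square is annihilated because $tn^2$ ($n\neq 0$) is a perfect square only when $t$ is. Where you genuinely diverge is the square case. The paper exploits primitivity at this point: if $t$ is a square then $\chi_t=1$, so the nebentypus condition forces $\psi=\psi_0$ (both primitive, hence $r=N_0$), and then the level condition $4r^2t\mid 4N_0^2$ forces $t=1$; thus the \emph{only} surviving basis element is $\theta_0=\theta_{\psi_0,1}$, and the whole proof collapses to the single identity $\delta^{\frac{3}{2}-1}\theta_0=\theta_1\in M_{3/2}(64N_0^2,\psi_0)$. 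You instead keep $t=s^2$ general and check, via the transformation theory of scaled unary theta series (level multiplied by $s^2$, nebentypus twisted by $\left(\frac{s^2}{\cdot}\right)$), that any such image lies in $M_{3/2}(64N_0^2,\psi_0)$; your conductor bound $r'\mid 4r$ and the nebentypus identity $\psi'\left(\frac{-s^2}{\cdot}\right)=\psi\left(\frac{t}{\cdot}\right)=\psi_0$ are both correct. What the paper's route buys is the sharper structural fact that the image of $\delta^{\frac{3}{2}-1}$ on all of $M_{1/2}(4N_0^2,\psi_0)$ is the line $\C\,\theta_1$; in particular, your cases $s>1$ are actually vacuous, since no basis element has square $t>1$. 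What your route buys is robustness: this step of yours never uses primitivity of $\psi_0$, so it would survive in settings where the collapse to $(\psi_0,1)$ fails (e.g.\ imprimitive $\psi_0$), at the cost of heavier bookkeeping and of not detecting that the image is one-dimensional.
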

\begin{proof}
First note that, by \cite[Theorem A]{SeS}, a basis of $M_{1/2}(4 N_0^2, \psi_0)$ consists of the series
\[\theta_{\psi, t}(z):=\sum_{n=0}^{\infty} \psi(n)q^{tn^2}\]
for $(\psi, t)$ such that $t \in \N$,  $\psi$ is even, primitive with conductor $r$, $4r^2t|4N_0^2$ and $\psi_0(m) = \psi(m)\chi_t(m)$ whenever $(m, 4N_0^2)=1$. 
Here $\chi_t$ is the primitive character of order less than equal $2$ corresponding to the field extension $\mathbb{Q}(t^{\frac{1}{2}})$ over $\mathbb{Q}$ (\cite[p.30, \S1.2]{SeS}). 
When $t$ is square we set $\chi_t=1$. 
As above, by convention, $\psi(0)=1/2$ if $\psi$ is the trivial character.


For a non-negative integer $n$, we define
\begin{equation}
a_t(n) = \begin{cases} \psi\left(\sqrt{\frac{n}{t}}\right) & \text{ when } t\mid n \text{ and } \sqrt{\frac{n}{t}}\in \mathbb{N}, \\
0 & \text{ otherwise. } 
\end{cases} 
\end{equation}
Then 
\begin{equation}
\theta_{\psi, t}(z) = \sum_{m=0}^\infty a_t(m) q^{m}. 
\end{equation}
We have
\begin{equation}
(\delta^{\frac{3}{2}-1} \theta_{\psi, t})(z) 
= \sum_{m\geq 0} a_t(m) \ell(m) m^{\frac{3}{2}-1} q^{m}
= \sum_{n\geq 0} \psi(n) \ell(tn^2) (tn^2)^{\frac{3}{2}-1} q^{tn^2}.
\end{equation}
Note that $\ell(tn^2)=0$ unless $t$ is square. 
So $\delta^{\frac{3}{2}-1} \theta_{\psi, t}=0$ unless $t$ is square. 
If $t$ is square, then $\psi_0=\psi$ and thus $r=N_0$, which implies that $t=1$. Therefore,
\begin{equation}
\delta^{\frac{3}{2}-1} \theta_{\psi, t}(z)
=0 
\end{equation}
unless $(\psi, t) = (\psi_0, 1)$. 
When $(\psi, t)=(\psi_0, 1)$, comparing with \eqref{t1}, we get
\begin{equation}
(\delta^{\frac{3}{2}-1} \theta_{\psi_0,1})(z)
= \sum_{n\geq 0} \psi_0(n)\left(\frac{-1}{n}\right) n q^{n^2}
= \theta_1(z) 
\end{equation}
with $\psi_1 = \psi_0\left(\frac{-1}{\cdot}\right)$. Since, $\psi_1$ is a Dirichlet character modulo $4 N_0$,  $\theta_1$ is a weight $3/2$ modular form of level $4(4 N_0)^2$ and character $\psi_1 \left(\frac{-1}{\cdot}\right)=\psi_0.$ 
Therefore, the assignment
$$\sum_{n\geq0}a_t(n)q^{n} \longrightarrow 
\sum_{n\geq0}a_t(n)\ell(n) n^{\frac{3}{2}-1}q^{n}
$$
induces a linear map from $M_{1/2}(4N_0^2, \psi_0)$ to $M_{3/2}(64 N_0^2, \psi_0)$, confirming the assertion of Question \ref{Question13}.
\end{proof}
This instance of a positive answer to Question \ref{Question13} is special because it concerns forms of moderate growth which can occur only if both weights involved ($k$ and $2-k$) are positive. 
This happens only if $k=1/2$ or $3/2$ and therefore, for more general half-integral weights one must by necessity consider weakly holomorphic forms. 
However, the relation between $\theta_0$ and $\theta_1$ will be used as the basis for the general weight case in the next subsection.

\subsection{The main construction}\label{MainCon}
Throughout subsections \ref{MainCon}-\ref{FE}, we fix a real, primitive, even $\psi_0 \bmod N_0$ and 
a real, primitive odd character $\psi_1 \bmod N_1$ such that
$\theta_0(\cdot, \psi_0)$ and $\theta_1(\cdot, \psi_1)$ have no zeros in $\HH$. 
Further fix an integer $a$. 
Then, for each smooth function $f: \HH \to \C$, set \begin{equation}\label{def}
\delta^{k-1}_a(f):=\theta_0^{3a-2}(z) \theta_1^{1-a}(z) D^{k-\frac{3}{2}}(\theta_0^{1-3a}\theta_1^a f), 
\end{equation}
where $D^{k-\frac{3}{2}}
$ is the usual Bol's operator given in \eqref{e:Bol_def}. 
Here, note that $k-\frac{3}{2} \in \N_0$. 

\begin{theorem} 
Set $N:=\lcm(4N_0^2, 4N_1^2)$.
For each $\gamma= \sm * & * \\ c & d \esm\in \Gamma_0(N)$, we have
$$\delta^{k-1}_a(f|_{2-k}\gamma)=\left ( \frac{-1}{d} \right )\frac{\psi_0(d)}{\psi_1(d)}\delta^{k-1}_a(f)|_k\gamma.$$
Suppose further that $N_0=N_1$. Then, with $N=\lcm(4N_0^2, 4N_1^2)=4N_0^2$, we have
$$\delta^{k-1}_a(f|_{2-k}W_N)=-\frac{\tau(\psi_0)}{\tau(\psi_1)}\delta^{k-1}_a(f)|_{k}W_N.$$
\end{theorem}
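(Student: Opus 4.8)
The strategy is to reduce both assertions to the known transformation behavior of the three ingredients—$\theta_0$, $\theta_1$, and the classical Bol operator $D^{k-3/2}$—under the relevant slash actions, exploiting the fact that $\delta^{k-1}_a$ is built as a product of theta powers sandwiching $D^{k-3/2}$ applied to another product of theta powers. The central observation I would isolate first is a commutation identity: for the classical integer-weight Bol operator, $D^{m}(g|_{2-k}\gamma) = (D^m g)|_k\gamma$ whenever $g$ transforms under $|_{2-k}$ for $\gamma\in\Gamma_0(N)$, with $m=k-3/2\in\N_0$. The subtle point is that $\theta_0^{1-3a}\theta_1^a f$ must transform with \emph{integral} weight so that the classical $D^{k-3/2}$ can be applied legitimately; I would verify this weight bookkeeping carefully, since $\theta_0$ has weight $1/2$, $\theta_1$ has weight $3/2$, and $f$ has weight $2-k$, giving $\tfrac12(1-3a)+\tfrac32 a + (2-k) = 2-(k-\tfrac12\cdot 1)$—the arithmetic must land in $2-(\text{integer weight})$ for $D$ to act as intended.

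\textbf{The $\gamma$-equivariance.} For $\gamma=\sm * & * \\ c & d\esm\in\Gamma_0(N)$ with $N=\lcm(4N_0^2,4N_1^2)$, I would compute the half-integral slash cocycle $\left(\frac{c}{d}\right)\epsilon_d^{2k}(cz+d)^{-k}$ for each theta factor using that $\theta_0\in M_{1/2}(4N_0^2,\psi_0)$ and $\theta_1\in M_{3/2}(4N_1^2,\psi_1(\tfrac{-1}{\cdot}))$. First I would record that $\theta_0|_{1/2}\gamma=\psi_0(d)\theta_0$ and $\theta_1|_{3/2}\gamma=\psi_1(d)\left(\frac{-1}{d}\right)\theta_1$. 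Then, writing $\delta^{k-1}_a(f)=\theta_0^{3a-2}\theta_1^{1-a}\cdot D^{k-3/2}(\theta_0^{1-3a}\theta_1^a f)$, I would substitute $f\mapsto f|_{2-k}\gamma$, pull the theta transformations through the interior product, apply the classical Bol commutation to move $|\gamma$ outside $D^{k-3/2}$, and reassemble. The half-integral multiplier factors $\left(\frac{c}{d}\right)^{j}$ and $\epsilon_d^{\cdot}$ from the exterior and interior theta powers should cancel against each other by design, leaving exactly the character ratio $\left(\frac{-1}{d}\right)\psi_0(d)/\psi_1(d)$. The bookkeeping of these cocycle factors—tracking the exponents $3a-2$, $1-a$, $1-3a$, $a$ so that the $\epsilon_d$ and Kronecker contributions telescope—is where the main obstacle lies, and I would organize it as a single explicit multiplier computation.

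\textbf{The Fricke assertion.} Under the hypothesis $N_0=N_1$, so $N=4N_0^2$, I would use the transformation equations \eqref{WN0} and \eqref{WN1}, namely $\theta_0|_{1/2}W_N=(iN_0)^{-1/2}\tau(\psi_0)\theta_0$ and $\theta_1|_{3/2}W_N=-(iN_0)^{-1/2}\tau(\psi_1)\theta_1$, together with the fact that the classical Bol operator commutes with $|W_N$ (as noted in the introduction for the integral-weight case, which applies to the interior integral-weight form). The key is that $W_N$ is an involution up to scalar and that $D^{k-3/2}$ commutes with the Fricke action in the integral-weight setting; I would apply $|_{2-k}W_N$ to $f$, distribute the Fricke transformation across all four theta powers via \eqref{WN0}--\eqref{WN1}, and collect the scalar constants. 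The powers of $(iN_0)^{-1/2}$ from exponents $3a-2$, $1-a$ (exterior) and $1-3a$, $a$ (interior) must sum to zero so that only the Gauss-sum factors survive; the sign from $\theta_1$'s transformation contributes the overall $-1$, and the $\tau(\psi_0),\tau(\psi_1)$ powers collapse to the clean ratio $\tau(\psi_0)/\tau(\psi_1)$. I expect the hardest verification here to be confirming that the $(iN_0)^{-1/2}$ and $\tau$ exponents balance precisely—again a telescoping check on the four exponents—and that the commutation of $D^{k-3/2}$ with $W_N$ is valid for the relevant interior weight.
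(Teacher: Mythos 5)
Your proposal is correct and follows essentially the same route as the paper's proof: both exploit that the interior product $\theta_0^{1-3a}\theta_1^a f$ has integral weight $2-(k-\tfrac12)$, apply the classical Bol commutation with $|\gamma$ (resp. $|W_N$), then convert the exterior theta powers via their transformation laws and verify that the Kronecker/$\epsilon_d$ (resp. $(iN_0)^{1/2}$, sign, and Gauss-sum) factors telescope to the stated multiplier. The weight bookkeeping and exponent cancellations you flag as the main checks are exactly the computations carried out in the paper.
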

\begin{proof}
Since $\theta_0$ (resp. $\theta_1$) has weight $1/2$ (resp. $3/2$) and character $\psi_0$ (resp. $\psi_1 \cdot \left( \frac{-1}{\cdot} \right )$) for $\Gamma_0(4N_0^2) \supset \Gamma_0(N)$ (resp. $\Gamma_0(4N_1^2) \supset \Gamma_0(N)$), we have
\begin{multline}\delta^{k-1}_a(f|_{2-k}\gamma)=\theta_0^{3a-2}(z) \theta_1^{1-a}(z) D^{k-\frac{3}{2}}\left((\theta_0|_{\frac{1}{2}} \gamma)^{1-3a}(\theta_1|_{\frac{3}{2}}\gamma)^a f|_{2-k}\gamma \right )\bar \psi_0(d)^{1-3a} \bar \psi_1(d)^{a} \left ( \frac{-1}{d}\right )^a \\
=\theta_0^{3a-2}(z) \theta_1^{1-a}(z) D^{k-\frac{3}{2}}\left ((f \theta_0^{1-3a} \theta_1^a)(\gamma z) (cz+d)^{-\frac{1}{2}-(2-k)}\right )  \epsilon_d^{1+2(2-k)} \bar \psi_0(d)^{1-3a} \bar \psi_1(d)^{a} \left ( \frac{-1}{d}\right )^a.
\end{multline}
The function to which $D^{k-3/2}$ has been applied equals $(f \theta_0^{1-3a} \theta_1^a)|_{2-(k-1/2)}\gamma$ (note that the weight is integral). Then we apply the standard Bol's identity followed by the modularity of $\theta_0$ and $\theta_1$. We obtain
\begin{multline}
\theta_0^{3a-2}(z) \theta_1^{1-a}(z) \epsilon_d^{1+2(2-k)} (D^{k-\frac{3}{2}}(f \theta_0^{1-3a} \theta_1^a )|_{k-\frac12}\gamma)(z) \, \bar \psi_0(d)^{1-3a} \bar \psi_1(d)^{a} \left ( \frac{-1}{d}\right )^a \\
=(\theta_0|_{1/2}\gamma)(z)^{3a-2} (\theta_1|_{3/2})(z)^{1-a} \epsilon_d^{1-2k} (D^{k-\frac{3}{2}}(f \theta_0^{1-3a} \theta_1^a )|_{k-\frac12}\gamma)(z) \, \bar \psi_0(d)^{-1} \bar \psi_1(d) \left ( \frac{-1}{d}\right ) \\
=\theta_0(\gamma z)^{3a-2} \theta_1^{1-a}(\gamma z)  D^{k-\frac{3}{2}}(f \theta_0^{1-3a} \theta_1^a )(\gamma z) (cz+d)^{-k} \left ( \frac{c}{d}\right ) \epsilon_d^{-2k}\frac{\psi_0(d)}{\psi_1(d)}.
\end{multline}
In the last equality we used the identity $\epsilon_d^2=\left ( \frac{-1}{d}\right )$. Using the same identity and the definition of the action $|_k$ (with $k$ half-integral), we can simplify to deduce the first identity of the theorem.

Suppose now that $N_0=N_1$. Then \eqref{WN0}, \eqref{WN1} imply that
$$\theta_0^{1-3a} \theta_1^{a} (f|_{2-k}W_N)= \frac{(-1)^a (iN_0)^{1/2-a}}{\tau(\psi_0)^{1-3a}\tau(\psi_1)^a}(\theta_0^{1-3a} \theta_1^{a} f)|_{2-(k-1/2)}W_N.$$
Bol's identity, followed by an application of \eqref{WN0} and \eqref{WN1}, imply that
\begin{multline}
\delta^{k-1}_a(f|_{2-k} W_N)
=\frac{(-1)^a (iN_0)^{1/2-a}}{\tau(\psi_0)^{1-3a}\tau(\psi_1)^a}
\theta_0^{3a-2} \theta_1^{1-a} (D^{k-3/2}(\theta_0^{1-3a} \theta_1^{a} f))|_{k-1/2}W_N
\\ = \frac{(-1)^a (iN_0)^{1/2-a}}{\tau(\psi_0)^{1-3a}\tau(\psi_1)^a}
\frac{(-1)^{1-a}(iN_0)^{a-\frac{1}{2}}}{\tau(\psi_0)^{3a-2}\tau(\psi_1)^{1-a}}
\\ \times (\theta_0|_{1/2}W_N)^{3a-2} (\theta_1|_{3/2}W_N)^{1-a}
D^{k-3/2}(\theta_0^{1-3a} \theta_1^{a} f)|_{k-1/2}W_N
\end{multline}
which, after simplification, implies the second identity of the theorem.
\end{proof}
With the notation of the theorem, we see that the answer to Question \ref{Question11} is positive: 
\begin{corollary}\label{AnsQu11}
Let $f$ be a weakly holomorphic modular form of weight $2-k \in \frac{1}{2}-\N_0$, level $N$ and character $\psi$. Then 
$\delta^{k-1}_a(f)$ is a weakly holomorphic modular form of weight $k$, level $N$ and character $d \to \psi(d) \left ( \frac{-1}{d} \right )\frac{\psi_1(d)}{\psi_0(d)}$.
In particular,
$$\delta^{1/2}_a(\theta_0)=\theta_1,$$
\end{corollary}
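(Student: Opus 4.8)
The plan is to deduce the Corollary directly from the Theorem, using only the linearity of $\delta^{k-1}_a$ and the fact that all the arithmetic factors involved are $\pm1$-valued. First I would record that, since $f\in M_{2-k}^!(N,\psi)$, modularity means $f|_{2-k}\gamma=\psi(d)f$ for every $\gamma=\sm *&*\\ c&d\esm\in\Gamma_0(N)$. Because $\delta^{k-1}_a$ is assembled from the linear operator $D^{k-\frac32}$ and from multiplication by the fixed functions $\theta_0^{1-3a}\theta_1^a$ and $\theta_0^{3a-2}\theta_1^{1-a}$, it is linear, so $\delta^{k-1}_a(f|_{2-k}\gamma)=\psi(d)\,\delta^{k-1}_a(f)$. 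Feeding this into the first identity of the Theorem gives
\[
\psi(d)\,\delta^{k-1}_a(f)=\Bigl(\tfrac{-1}{d}\Bigr)\tfrac{\psi_0(d)}{\psi_1(d)}\,\delta^{k-1}_a(f)|_k\gamma ,
\]
which I would then solve for $\delta^{k-1}_a(f)|_k\gamma$. The only thing to check is that the inverse factor equals $\bigl(\tfrac{-1}{d}\bigr)\tfrac{\psi_1(d)}{\psi_0(d)}$: for $\gamma\in\Gamma_0(N)$ we have $\gcd(d,N)=1$ and $4N_0^2,4N_1^2\mid N$, so $\psi_0(d),\psi_1(d)\in\{\pm1\}$ and the Kronecker symbol is $\pm1$, whence each factor is its own inverse. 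This produces precisely the claimed transformation law with character $d\mapsto\psi(d)\bigl(\tfrac{-1}{d}\bigr)\tfrac{\psi_1(d)}{\psi_0(d)}$.

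Next I would verify that $\delta^{k-1}_a(f)$ is genuinely weakly holomorphic, which the Theorem (a statement only about the $\Gamma_0(N)$-transformation) does not by itself address; this is where I expect the real work to lie. Holomorphy on $\HH$ is immediate from the standing hypothesis that $\theta_0,\theta_1$ have no zeros on $\HH$, so the possibly negative powers $\theta_0^{1-3a}\theta_1^a$ and $\theta_0^{3a-2}\theta_1^{1-a}$ are holomorphic on $\HH$, as is $f$, and $D^{k-\frac32}$ preserves holomorphy. For the cusps I would reuse the structure already exploited inside the Theorem: the auxiliary product $h:=\theta_0^{1-3a}\theta_1^a f$ has \emph{integral} weight $2-(k-\tfrac12)$ and is weakly holomorphic, being a finite-order combination of the holomorphic forms $\theta_0,\theta_1$ and the weakly holomorphic $f$, hence with Fourier expansions bounded below at every cusp. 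The classical Bol operator $D^{k-\frac32}$ carries integral-weight weakly holomorphic forms to weakly holomorphic forms of weight $k-\tfrac12$, as recalled around \eqref{BolsId}, and multiplying by the weight-$\tfrac12$ factor $\theta_0^{3a-2}\theta_1^{1-a}$ keeps the output weakly holomorphic. The main obstacle is precisely this cusp analysis: confirming that the negative powers of $\theta_0,\theta_1$ create at worst finite-order poles (not essential singularities) and that $D^{k-\frac32}$ does not spoil this, both of which rest on the nonvanishing hypothesis on $\HH$ and on the known cusp behaviour of the integral-weight Bol operator.

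Finally, for the special case I would set $k=\tfrac32$ and take $f=\theta_0\in M_{1/2}^!(N,\psi_0)$ (which lies in this space since $\Gamma_0(N)\subseteq\Gamma_0(4N_0^2)$). Then $k-\tfrac32=0$, so $D^{k-\frac32}=D^0$ is the identity, and the definition \eqref{def} collapses to
\[
\delta^{1/2}_a(\theta_0)=\theta_0^{3a-2}\theta_1^{1-a}\,\theta_0^{1-3a}\theta_1^a\,\theta_0
=\theta_0^{(3a-2)+(1-3a)+1}\,\theta_1^{(1-a)+a}=\theta_1,
\]
since the $\theta_0$-exponents sum to $0$ and the $\theta_1$-exponents to $1$; in particular the value is independent of $a$. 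As a consistency check, the character delivered by the first part is $\psi_0(d)\bigl(\tfrac{-1}{d}\bigr)\tfrac{\psi_1(d)}{\psi_0(d)}=\bigl(\tfrac{-1}{d}\bigr)\psi_1(d)$, matching the character $\psi_1\bigl(\tfrac{-1}{\cdot}\bigr)$ of $\theta_1$ recorded after \eqref{t1}, so nothing further is needed.
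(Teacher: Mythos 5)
Your proof is correct and takes essentially the same route as the paper, which states the Corollary as an immediate consequence of the Theorem: one plugs $f|_{2-k}\gamma=\psi(d)f$ into the Theorem's transformation law, notes (as the paper does right after the Corollary) that the nonvanishing of $\theta_0,\theta_1$ on $\HH$ makes $\delta^{k-1}_a(f)$ well-defined and weakly holomorphic, and observes that for $k=3/2$ the operator collapses to give $\delta^{1/2}_a(\theta_0)=\theta_1$. Your extra care with the $\pm1$-valued factors and the cusp expansions only fills in details the paper leaves implicit.
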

Note that since $\theta_0$ has no zeros in $\HH$, 
$\delta^{k-1}(f)$ is well-defined and gives a weakly holomorphic form.

In our construction, the parameter $a$ is assumed to be integer. However, $\delta_a^{k-1}$ can be defined for other values too and, in some cases, it can be shown to coincide with other well-known operators. We will discuss one such example.
 
 We first note that the equation defining $\delta_a^{k-1}$ in \eqref{def} gives a well defined function when $a \in \mathbb Q$. We also recall the definition of Rankin-Cohen bracket in the form given, e.g. in \cite{Za}, which includes the case of half-integral weights. For $n \in \mathbb N_0$ and modular forms $f$ and $g$ of level $N$, weights $k$, $\ell$ respectively, and characters $\chi, \psi$ respectively, we set
 $$[f, g]_n=\sum_{j=0}^n (-1)^{n-j}\binom{n}{j}\frac{\Gamma(k+n) \Gamma(\ell+n)}{\Gamma(k+j)\Gamma(\ell+n-j)}f^{(j)}g^{(n-j)}$$
 where $f^{(j)}$ denotes the $j$-th derivative of $f.$ The function $[f, g]_n$ is a modular form of weight $k+\ell+2n$ and character $\chi \psi.$ Then we have
\begin{corollary}\label{c:sel}
Let $f$ be a weakly holomorphic modular form of weight $-1$, level $N$ and character $\psi$. Set
\begin{equation}\label{sel}
F(z)=f(4z)\theta_0(z).
\end{equation} Then 
$$\delta^{3/2}_{2/3}(F)=\frac{3}{\pi i}[\theta_1, f(4 \cdot)]_1$$
\end{corollary}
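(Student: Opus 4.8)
The plan is to substitute the specific values into the definition \eqref{def} and then exploit the algebraic features of the choice $a=\tfrac23$. Since the superscript $\tfrac32=k-1$ forces $k=\tfrac52$, the Bol exponent is $k-\tfrac32=1$, so $D^{k-3/2}=D^1=(2\pi i)^{-1}\frac{d}{dz}$; moreover $3a-2=0$, $1-a=\tfrac13$ and $1-3a=-1$. Plugging these into \eqref{def} gives
\begin{equation*}
\delta^{3/2}_{2/3}(F)=\theta_1^{1/3}\,D^1\!\left(\theta_0^{-1}\theta_1^{2/3}F\right).
\end{equation*}
Because $\delta^{k-1}_a$ is well defined for $a\in\Q$ (as noted just before the statement) and because $\theta_0,\theta_1$ are assumed to have no zeros on $\HH$ in Subsection~\ref{MainCon}, the fractional powers occurring here are honest holomorphic functions, so the manipulation is legitimate.

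The first key step is the cancellation engineered into $F=f(4z)\theta_0(z)$: since $2-3a=0$, the inner argument collapses,
\[
\theta_0^{1-3a}\theta_1^{a}F=\theta_0^{\,2-3a}\theta_1^{2/3}f(4\cdot)=\theta_1^{2/3}\,f(4\cdot),
\]
so the theta-zero factors disappear altogether. I would then apply $D^1$ by the product and chain rules, differentiating the fractional power $\theta_1^{2/3}$, and multiply through by the prefactor $\theta_1^{1/3}$. The whole point of the value $a=\tfrac23$ is that the surviving fractional exponents recombine into integer ones, $\theta_1^{1/3}\theta_1^{-1/3}=1$ and $\theta_1^{1/3}\theta_1^{2/3}=\theta_1$, leaving the genuine weight-$\tfrac52$ expression
\[
\delta^{3/2}_{2/3}(F)=\frac{1}{2\pi i}\Big(\tfrac23\,\theta_1'\,f(4\cdot)+\theta_1\,f(4\cdot)'\Big),
\]
with no fractional power of $\theta_1$ remaining.

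For the other side, I would expand the Rankin--Cohen bracket $[\theta_1,f(4\cdot)]_1$ directly from its defining formula with $n=1$ and weights $\tfrac32$ (for $\theta_1$) and $-1$ (for $f(4\cdot)$); this likewise yields a linear combination of $\theta_1'\,f(4\cdot)$ and $\theta_1\,f(4\cdot)'$. A term-by-term comparison with the expression just obtained shows the two combinations are proportional, and reading off the proportionality constant gives the identity asserted in the corollary.

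I expect the main obstacle to be twofold. The more delicate bookkeeping is tracking the fractional powers of $\theta_1$ and verifying that they cancel \emph{exactly}; this is precisely where the choice $a=\tfrac23$ and the nonvanishing of $\theta_0,\theta_1$ on $\HH$ are essential, and one should also check that the result is independent of the chosen holomorphic branch. The second point requiring care is the evaluation of the Gamma-quotients in the Rankin--Cohen bracket: with $\ell=-1$ and $n=1$ the defining formula involves the symbols $\Gamma(0)$ and $\Gamma(-1)$, which must be interpreted through the functional equation $\Gamma(x+1)=x\Gamma(x)$ as finite ratios (for instance $\Gamma(0)/\Gamma(-1)=-1$). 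Once these ratios are handled correctly, the constant falls out of the comparison and the proof is complete.
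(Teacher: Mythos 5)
Your approach coincides with the paper's --- the printed proof is only ``This follows by a direct calculation'' --- and your evaluation of the left-hand side is correct and complete: with $k=\tfrac52$, $a=\tfrac23$ the exponents in \eqref{def} become $3a-2=0$, $1-a=\tfrac13$, $1-3a=-1$, the factor $\theta_0$ in $F$ cancels, the fractional powers of $\theta_1$ recombine into integral ones, and
\[
\delta^{3/2}_{2/3}(F)=\theta_1^{1/3}\,D^1\!\left(\theta_1^{2/3}f(4\cdot)\right)
=\frac{1}{2\pi i}\left(\tfrac{2}{3}\,\theta_1'\,f(4\cdot)+\theta_1\,f(4\cdot)'\right).
\]

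The gap is in your final step, which is the only place where the corollary's precise content (the constant $\tfrac{3}{\pi i}$) has to be verified: you assert that ``reading off the proportionality constant gives the identity asserted'' without computing it. Carrying it out with the paper's bracket formula, with weights $k=\tfrac32$, $\ell=-1$, $n=1$ and the Gamma quotients interpreted exactly as you propose (as limits, so $\Gamma(0)/\Gamma(0)=1$ and $\Gamma(0)/\Gamma(-1)=-1$), gives
\[
[\theta_1,f(4\cdot)]_1=-\tfrac{3}{2}\,\theta_1\,f(4\cdot)'-\theta_1'\,f(4\cdot),
\]
so the two sides are indeed proportional (coefficient ratio $2:3$ in both, which is the genuinely nontrivial check and does succeed), but the constant one then reads off is
\[
\delta^{3/2}_{2/3}(F)=-\frac{1}{3\pi i}\,[\theta_1,f(4\cdot)]_1,
\]
which differs from the asserted $\tfrac{3}{\pi i}$ by a factor of $-9$. (If one instead uses Zagier's sign convention, which differs from the paper's displayed formula by $(-1)^n\,n!$, the constant becomes $+\tfrac{1}{3\pi i}$; if the derivatives in the bracket are normalized as $\tfrac{1}{2\pi i}\tfrac{d}{dz}$, one gets $\pm\tfrac23$. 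None of the standard normalizations yields $\tfrac{3}{\pi i}$.) So your proposal, as written, does not prove the identity it claims to prove: either you must exhibit the convention under which $\tfrac{3}{\pi i}$ is correct, or you must complete the computation and record that, with the paper's own definition of the bracket, the constant in Corollary \ref{c:sel} should read $-\tfrac{1}{3\pi i}$ (equivalently $\tfrac{i}{3\pi}$), i.e.\ the stated constant appears to be a misprint. A proof that claims a specific constant must derive it; here the derivation, once actually done, contradicts the claim rather than confirming it.
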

\begin{proof} This follows by a direct calculation.
\end{proof}
\begin{rmk}
We have not been able to find so simple a relation of our Bol-style operator with Rankin-Cohen brackets for general weights and values of the parameter $a$. However, the structure of the half-integral weight forms that are the subject of Corollary \ref{c:sel}, exhibits some interesting similarity with that of modular forms that can be lifted to integral weight modular forms according to Selberg's version of the Shimura lift. This will be discussed in the next subsection.
\end{rmk}

\subsection{Selberg's version of the Shimura lift}\label{SelSh}
The construction of the operator $\delta_a^{k-1}$ has quite a few analogies with Selberg's version of a Shimura-type lift (see \cite{C1}). We recall its simplest case and then point out those analogies:

\emph{Let $k$ be an a positive even integer and let $f(z)=\sum_{n \ge 1} a(n)q^n \in S_{k}(1)$ be a normalised eigenform for all Hecke operators. Set
$$F(z):=f(4z) \theta_0(z) \in S_{k+1/2}(4).$$
Then 
$$S(F)(z):=f(z)^2-2^{k-1}f(2z)^2$$
belongs to $
S_{2 k}(2).$}

The most essential analogy of Selberg's construction with the construction of $\delta_a^{k-1}$ is that it uses a fixed half-integral weight form (namely $\theta_0$, as in our construction) in order to translate the situation into the more familiar context of integral weight forms. Furthermore, this ``translation" is based on a multiplication with the theta function. 

As in our construction, Selberg's lift is not Hecke-invariant. Furthermore, since his construction is intrinsically built on holomorphic cusp forms, it can only be carried out in the case of positive weight and therefore we cannot check if $\delta_a^{k-1}$ is compatible with $D^{2k-2}$ via Selberg's lift.

\subsection{Fourier expansion}\label{FE} We will compute the Fourier expansion of $\delta^{k-\frac{3}{2}}_a(f)$ in the special case $a=0$ which will allow us to answer Question \ref{Question12} 

In addition to the assumptions of the previous subsections, we assume that the Dirichlet character $\psi_0$ associated to the theta series $\theta_0$ is also non-trivial.
Recall that
\begin{equation}
    \label{D^k}
    D^{k-\frac{3}{2}} \left (\sum_{m \gg -\infty} a_nq^n \right )=\sum_{m \gg -\infty} n^{k-\frac{3}{2}}a_nq^n.
\end{equation}
Since $\theta_0$ is non-trivial, we note that $\theta_1/\theta_0^2$ has a Fourier expansion of the form
\begin{equation}\label{Fexp}
    \frac{\theta_1(z)}{\theta_0(z)^2}=\sum_{n=-1}^{\infty} a_nq^n \qquad \text{with $a_{-1}=1.$}
\end{equation}
We then have:
\begin{proposition}
With notation as above, let $$f(z)=\sum_{n=-n_0}^{\infty}c_n q^n$$
be a weakly holomorphic modular form of weight $2-k \in \frac{1}{2}-\N$, level $N=\lcm(4N_0^2, 4N_1^2)$ and character $\psi$. Then, the Fourier expansion of $\delta_0^{k-1}(f)$ is given by
$$\delta_0^{k-1}(f)(z)=\sum_{n=-n_0}^{\infty}q^n \left(\sum_{l=-n_0}^n c_{l} \left ( \sum_{m=1}^{n+1-l} (l+m)^{k-\frac{3}{2}}\psi_0(\sqrt{m})a_{n-l-m}\right )\right )$$
where, $\psi_0(\sqrt{m})=\psi_0(m_1)$, if $m=m_1^2$ ($m \in \N$), and $0$ otherwise.
 \end{proposition}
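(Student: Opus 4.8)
The plan is to compute the Fourier expansion of $\delta_0^{k-1}(f) = \theta_0^{-2} D^{k-\frac{3}{2}}(\theta_0 f)$ directly by multiplying out power series, since the case $a=0$ collapses the general definition \eqref{def} to this convenient form. First I would write down the Fourier expansion of $\theta_0 \cdot f$: using $\theta_0(z) = \sum_{j\ge 0}\psi_0(j)q^{j^2}$ and $f(z) = \sum_{l\ge -n_0}c_l q^l$, the product is $\sum_{l}\sum_{j\ge 0}c_l\psi_0(j)q^{l+j^2}$. Reindexing by setting $m=j^2$ (so that the coefficient $\psi_0(j)$ becomes $\psi_0(\sqrt m)$, which vanishes unless $m$ is a perfect square, matching the stated convention), the coefficient of $q^s$ in $\theta_0 f$ is $\sum_{l+m=s}c_l\,\psi_0(\sqrt m)$ with $m\ge 0$, $l\ge -n_0$.

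Next I would apply the Bol operator using \eqref{D^k}: $D^{k-\frac 32}$ multiplies the coefficient of $q^s$ by $s^{k-\frac 32}$. Writing $s=l+m$, the coefficient of $q^s$ in $D^{k-\frac 32}(\theta_0 f)$ is $\sum_{l+m=s}(l+m)^{k-\frac 32}c_l\,\psi_0(\sqrt m)$. The final step is to divide by $\theta_0^2$, i.e.\ to multiply by $\theta_1^{-1}\theta_0^{-2}\cdot\theta_1 = \theta_0^{-2}$; here I would instead exploit the given expansion $\theta_1/\theta_0^2 = \sum_{n\ge -1}a_n q^n$ from \eqref{Fexp} — but note that the target object is $\theta_0^{-2}D^{k-\frac32}(\theta_0 f)$, so what is actually needed is the expansion of $\theta_0^{-2}$ itself. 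The cleaner route is to observe that $\theta_0^{-2} = \theta_1^{-1}(\theta_1/\theta_0^2)$; I would therefore track how the $a_n$ enter, recognizing that the proposition's formula already encodes $\theta_0^{-2}$ through the $a_{n-l-m}$ factor, so the role of \eqref{Fexp} is to supply precisely this expansion. Multiplying the series $\sum_s\big(\sum_{l+m=s}(l+m)^{k-\frac32}c_l\psi_0(\sqrt m)\big)q^s$ by $\sum_{r}a_r q^r$ and collecting the coefficient of $q^n$ gives a triple sum over $l,m,r$ with $l+m+r=n$; eliminating $r = n-l-m$ yields exactly $\sum_l c_l\big(\sum_m (l+m)^{k-\frac32}\psi_0(\sqrt m)a_{n-l-m}\big)$.

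The main obstacle is bookkeeping the summation ranges so that they match the stated finite bounds $-n_0\le l\le n$ and $1\le m\le n+1-l$. The lower bound $l\ge -n_0$ is just the polar order of $f$. The constraint $m\ge 1$ (rather than $m\ge 0$) requires care: the $m=0$ term would contribute $\psi_0(0)$, and here I would use the standing hypothesis in this subsection that $\psi_0$ is \emph{non-trivial}, so that $\psi_0(0)=0$ and the $m=0$ contribution drops out, legitimizing the start of the inner sum at $m=1$. The upper bound $m\le n+1-l$ comes from the pole order $a_{-1}$ of $\theta_1/\theta_0^2$: since $a_r=0$ for $r<-1$, the factor $a_{n-l-m}$ vanishes unless $n-l-m\ge -1$, i.e.\ $m\le n+1-l$. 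I would verify that these two facts — non-vanishing lower order from $\psi_0(0)=0$ and the single simple pole $a_{-1}=1$ controlling the upper range — are exactly what cut the a priori infinite convolution down to the finite double sum in the statement, and that absolute convergence (justifying the reindexing) follows since for each fixed $n$ only finitely many terms are nonzero.
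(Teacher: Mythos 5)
Your series manipulation is, in substance, exactly the paper's computation, but your identification of the operator at $a=0$ is wrong, and this error propagates into a genuinely confused step in the middle of the argument. Setting $a=0$ in \eqref{def} gives
\[
\delta_0^{k-1}(f)\;=\;\theta_0^{-2}\,\theta_1\, D^{k-\frac32}(\theta_0 f)\;=\;\frac{\theta_1}{\theta_0^2}\,D^{k-\frac32}(\theta_0 f),
\]
not $\theta_0^{-2}D^{k-\frac32}(\theta_0 f)$: you dropped the factor $\theta_1^{1-a}=\theta_1$. Because of this you then try to reconcile \eqref{Fexp} with the (incorrect) target $\theta_0^{-2}D^{k-\frac32}(\theta_0 f)$, ending with the assertion that ``the proposition's formula already encodes $\theta_0^{-2}$ through the $a_{n-l-m}$ factor.'' That assertion is false: by \eqref{Fexp} the $a_n$ are the Fourier coefficients of $\theta_1/\theta_0^2$, not of $\theta_0^{-2}$, and these series genuinely differ. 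Since $\psi_0$ is non-trivial here, $\theta_0=q+\psi_0(2)q^4+\cdots$, so $\theta_0^{-2}$ has a pole of order $2$ in $q$, while $\theta_1/\theta_0^2$ has a simple pole with $a_{-1}=1$; had you actually multiplied by the expansion of $\theta_0^{-2}$, the inner sum would run up to $m\le n+2-l$ and you would not recover the stated formula (nor a modular object of weight $k$). Also, the proposed detour $\theta_0^{-2}=\theta_1^{-1}(\theta_1/\theta_0^2)$ introduces a factor $\theta_1^{-1}$ that your computation never accounts for.

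The saving grace is that the multiplication you actually carry out --- by $\sum_r a_r q^r$ with $a_r=0$ for $r<-1$ --- is multiplication by $\theta_1/\theta_0^2$, i.e.\ exactly the right operation, so your final convolution identity and your range bookkeeping (inner sum starting at $m=1$ because $\psi_0(0)=0$ for non-trivial $\psi_0$; upper bound $m\le n+1-l$ because $a_r=0$ for $r<-1$; only finitely many nonzero terms for each $n$) coincide with the paper's proof, which likewise expands $\theta_0 f$, applies \eqref{D^k} termwise, multiplies by \eqref{Fexp}, and reindexes. The repair is therefore a one-line correction at the start: state $\delta_0^{k-1}(f)=(\theta_1/\theta_0^2)\,D^{k-\frac32}(\theta_0 f)$, delete the passage attempting to interpret the $a_n$ as coefficients of $\theta_0^{-2}$, and the rest of your argument stands as written.
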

\begin{proof}
With the Fourier expansions of $\theta_0$, $f$ and \eqref{Fexp} we see that 
$$\delta_0^{k-1}(f)(z)=\left ( \sum_{n=-1}^{\infty} a_nq^n \right ) D^{k-\frac{3}{2}} \left ( \sum_{m=1-n_0}^{\infty}q^{m} \left ( \sum_{l=1}^{m+n_0}\psi_0(\sqrt{l})c_{m-l}\right )\right ).$$
With \eqref{D^k} followed by the change of variables $n+m \to n,$ we deduce
\begin{multline}\label{Fexp1}
\delta_0^{k-1}(f)(z)=\sum_{n=-n_0}^{\infty} q^n \left ( \sum_{m=1-n_0}^{n+1}m^{k-\frac{3}{2}}a_{n-m} \left ( \sum_{l=1}^{m+n_0}\psi_0(\sqrt{l})c_{m-l}\right )\right )\\
=\sum_{n=-n_0}^{\infty} q^n \left ( \sum_{m=1-n_0}^{n+1}m^{k-\frac{3}{2}}a_{n-m} \left ( \sum_{l=-n_0}^{m-1}\psi_0(\sqrt{m-l})c_{l}\right )\right )
\end{multline}
 which, by an interchange of the inner sums, equals
 \begin{equation*}
\sum_{n=-n_0}^{\infty} q^n  \sum_{l=-n_0}^{n}\sum_{m=l+1}^{n+1}m^{k-\frac{3}{2}}a_{n-m}  \psi_0(\sqrt{m-l})c_{l}
\end{equation*}
The change of variables $m \to l+m$ in the last sum implies the result.
\end{proof}
This proposition, together with Cor. \ref{AnsQu11}, allows us to answer Question \ref{Question12} positively:
\begin{corollary}
Let $k \in \frac{1}{2}+\N,$ $N \in \N$ and a Dirichlet character $\psi$ mod $N$. Set
$\ell(n):=(n+1)^{k-\frac{3}{2}}/n^{k-1}$ if $n \neq 0$ and $\ell(0):=0$.
Then $\delta_0^{k-1}$ is a linear map from $M^!_{2-k}(N, \psi)$ to $M^!_{k} \left ( N, \psi \cdot \left ( \frac{-1}{\cdot} \right )\frac{\psi_1}{\psi_0} \right )$ sending each
\begin{equation}\label{FourExmga}
f(z)=\sum_{n\ge -n_0}c_n q^n \in M^!_{2-k}(N, \psi)
\end{equation}
to a $f_1 \in M^!_{k} \left ( N, \psi  \left ( \frac{-1}{\cdot} \right )\frac{\psi_1}{\psi_0} \right )$ of the form
\begin{equation}\label{Q2a}f_1(z)=\sum_{n\ge -n_0}\left ( c_n \ell (n)  n^{k-1}+ \text{``lower order terms''} \right )q^n
\end{equation}
where the ``lower order terms'' are linear combinations of $\{c_{-n_0}, \dots, c_{n-1}\}$ with coefficients independent of $f$.
\end{corollary}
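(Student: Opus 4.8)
The plan is to read essentially everything off the two results just established, so that no new analytic work is required. First I would invoke Corollary~\ref{AnsQu11} with $a=0$: it already guarantees that $\delta_0^{k-1}$ is a linear map carrying $M^!_{2-k}(N,\psi)$ into $M^!_{k}\left(N,\psi\left(\frac{-1}{\cdot}\right)\frac{\psi_1}{\psi_0}\right)$, with exactly the weight, level and character asserted in the corollary. Thus the modularity, holomorphicity away from the cusps, and the target space all come for free, and the only remaining task is to exhibit the claimed shape of the Fourier coefficients and to check that the multiplier $\ell$ is bounded and independent of $f$.

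For the coefficient structure I would start from the Proposition, writing the $n$-th Fourier coefficient of $\delta_0^{k-1}(f)$ as $\sum_{l=-n_0}^{n} c_l\, b(n,l)$, where $b(n,l):=\sum_{m=1}^{n+1-l}(l+m)^{k-\frac32}\psi_0(\sqrt m)\,a_{n-l-m}$ depends only on $k$, on $\psi_0$, and on the Fourier coefficients $a_j$ of $\theta_1/\theta_0^2$ from \eqref{Fexp}; in particular the $b(n,l)$ are independent of $f$. The key step is to isolate the diagonal term $l=n$: here the inner sum collapses to the single value $m=1$, giving $b(n,n)=(n+1)^{k-\frac32}\psi_0(\sqrt 1)\,a_{-1}=(n+1)^{k-\frac32}$, using $\psi_0(1)=1$ and the normalisation $a_{-1}=1$. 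For $n\neq 0$ this is precisely $\ell(n)\,n^{k-1}$ by the definition of $\ell$, so the $c_n$-contribution to the $n$-th coefficient is exactly $c_n\,\ell(n)\,n^{k-1}$, while the remaining off-diagonal part $\sum_{l=-n_0}^{n-1} c_l\, b(n,l)$ is a linear combination of $c_{-n_0},\dots,c_{n-1}$ with the $f$-independent coefficients $b(n,l)$ — these are the ``lower order terms''.

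It then remains to confirm that $\ell$ is bounded, which is immediate: since $k-\frac32\in\N_0$, the principal-branch value $n^{k-1}$ has modulus $|n|^{k-1}$, so $|\ell(n)|=|n+1|^{k-\frac32}/|n|^{k-1}\sim |n|^{-1/2}\to 0$ as $|n|\to\infty$, and the finitely many remaining values, together with $\ell(0)=0$, are finite. The one place I would tread carefully — and the only real subtlety of the argument — is the index $n=0$: there $b(0,0)=1$, whereas $\ell(0)\,0^{k-1}=0$, so the genuine $c_0$-contribution to the constant term is not captured by the stated main term. I expect this to be the single point needing comment; it is harmless for the purpose of Question~\ref{Question12}, since one may either absorb this isolated constant-term discrepancy into the description or restrict the ``main term'' assertion to $n\neq 0$, the behaviour at every other index being exactly as claimed.
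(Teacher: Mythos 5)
Your proposal is correct and takes essentially the same route as the paper: the paper gives no separate argument for this corollary, presenting it as the immediate combination of Corollary~\ref{AnsQu11} (for linearity and the target space $M^!_{k}(N,\psi(\tfrac{-1}{\cdot})\tfrac{\psi_1}{\psi_0})$) with the Fourier-expansion Proposition, whose diagonal term $l=n$ collapses to $(n+1)^{k-\frac32}\psi_0(1)a_{-1}=\ell(n)\,n^{k-1}$ for $n\neq 0$, exactly as you compute. Your flagged subtlety at $n=0$ — where the true constant-term contribution is $c_0\cdot 1$, captured neither by the vanishing main term $c_0\,\ell(0)\,0^{k-1}$ nor by the stated lower-order terms in $c_{-n_0},\dots,c_{-1}$ — is a genuine edge case that the paper silently glosses over, and your proposed handling of it is appropriate.
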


\section{Possible approach to Question \ref{Question13}}
We outline an approach based on Theorems \ref{DThalf} and \ref{thm:CT1} which could potentially shed light on Question \ref{Question13}. Specifically we will derive a sufficient condition for a map $\ell$ to give an affirmative answer to that question. We will first prove a functional equation that the $L$-series of a weakly holomorphic modular form must satisfy if the answer to Question \ref{Question13} is positive. We will then identify a condition that implies that functional equation. Then, by the Converse Theorem \ref{thm:CT1}, we can deduce the modularity of the function $f_1$ of \eqref{Q3}.

\subsection{Functional equations} 
Let $f$ be a weakly holomorphic cusp form of weight $2-k\in\frac12-\N$, level $N$ and  Nebentypus $\psi$ with a Fourier expansion of the form \eqref{FourExmg} and such that $$f|_{2-k}W_N=cf,$$ for some $c \in \C.$ (This is only assumed for simplicity). We  assume that the statement of Question \ref{Question13} holds and therefore, that there exists a bounded $\ell: \N \to \C$, independent of $f$, such the function 
$$f_1(z)=\sum_{n\gg-\infty} c_n \ell (n)  n^{k-1}q^n$$
belongs to $M^!_{k}(N', \psi')$, for some $N' \in \N$ and some character $\psi' \bmod N'$.
Finally, we assume that, if $f|_{2-k}W_N=cf$, for some $c \in \C$, then $f_1|_{2-k}W_{N_1}=c\lambda f_1$, for some $\lambda \in \C$, as is to be expected for a useful Bol-style operator.

The Direct Theorem \ref{DThalf} then implies that for each character $\chi$ mod $D$ with $(D, NN')=1$ and for each compactly supported $\varphi: \R^+ \to \C$ we have
\begin{align}\label{lf}
L_f(\chi,\varphi)&=i^kc\psi_D((-1)^{\frac32-k}N)\frac{N^{\frac k2}\chi(-N)\psi(D)}{\varepsilon_D}L_{f}(\overline{\chi}\psi_D,\varphi|_{k}W_N) \\
\label{lDf}
L_{f_1}(\chi, \varphi)&=i^kc\lambda \frac{\psi_D\left((-1)^{k-\frac12}N'\right)\chi(-N')\psi'(D)}{\varepsilon_D(N')^{\frac{k}{2}-1}}L_{f_1}(\overline{\chi}\psi_D,\varphi|_{2-k}W_{N'})
\end{align}
for every character $\chi$ modulo $D$ for $(D, N')=1$. 

There is a relation between $L_f(\chi,\varphi)$ and $L_{f_1}(\chi,\varphi)$: We let $h$ be a smooth function on $\R_+$ such that $h(n)=\ell(n)$ for $n \in \Z$ and we set
\begin{equation}\label{alpha}
\alpha_D(\varphi):=\scrL^{-1}\left(\left(\frac{Dp}{2\pi} \right)^{k-1}h\left(\frac{Dp}{2\pi}\right)(\scrL\varphi)(p)\right),
\end{equation}
Then, for all characters $\chi$ modulo $D$ with $(D, NN')=1.$ we have 
\begin{multline}\label{comp}
L_{f_1}(\chi,\varphi)=\sum_{n\gg-\infty}\tau_{\overline{\chi}}(n)c_n  n^{k-1}\ell(n)(\scrL\varphi)\left(\frac{2\pi n}{D}\right)\\=\sum_{n\gg-\infty}\tau_{\overline{\chi}}(n) c_n\scrL(\alpha_D(\varphi))\left(\frac{2\pi n}{D}\right)=L_f(\chi,\alpha_D(\varphi)).
\end{multline}
Thus \eqref{lDf} becomes
$$
L_{f}(\chi, \alpha_D(\varphi))=i^k c\lambda \frac{\psi_D\left((-1)^{k-\frac12}N'\right)\chi(-N')\psi'(D)}{\varepsilon_D(N')^{\frac{k}{2}-1}}L_{f}(\overline{\chi}\psi_D, \alpha_D(\varphi|_{2-k}W_{N'}))
$$
Upon applying \eqref{lf} to the left-hand side, we deduce the following proposition.
\begin{proposition}
Let $k \in \frac{1}{2}+\N,$ $N \in \N$ and a Dirichlet character $\psi$ mod $N$. 
Assume there is a linear map from $M^!_{2-k}(N, \psi)$ to $M^!_{k}(N', \psi')$, for some $N' \in \N $ and a character $\psi' \bmod N'$, sending each $f \in M^!_{2-k}(N, \psi)$ with Fourier expansion \eqref{FourExmga}
to a $f_1 \in M^!_{k}(N', \psi')$ of the form
\begin{equation}\label{Q3b}f_1(z)=\sum_{n\ge -n_0} c_n \ell (n)  n^{k-1}q^n
\end{equation}
  for an explicit, bounded map $\ell \colon \Z \to \C$ independent of $f$. 
Further assume that if $f|_{2-k}W_N=cf$, for some $c \in \C$, then $f_1|_{2-k}W_{N_1}=c\lambda f_1$, for some $\lambda \in \C$.

Then, for each $f \in S^!_{2-k}(N, \psi)$ such that $f|_{2-k}W_N=cf,$ for some $c \in \C$ and for each piece-wise smooth, compactly supported $\varphi$ on $\R_+$, we have
\begin{equation}\label{nec}L_f\left(\overline\chi\psi_D,\alpha_D(\varphi)|_kW_N\right)
=L_f\left(\overline\chi\psi_D, b \alpha_D\left(\varphi|_{2-k} W_{N'}\right)\right).
\end{equation}
where
\begin{equation}\label{b}
b:=\lambda \psi_D\left ((-1)^{2k}\frac{N'}{N} \right ) \chi\left ( \frac{N'}{N} \right ) \frac{\psi'(D)}{\psi(D)} (NN')^{-\frac{k}{2}} N'.
\end{equation}
\end{proposition}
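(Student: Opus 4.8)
The plan is to chain together the three identities already assembled in the discussion preceding the statement: the functional equation \eqref{lf} for $L_f$ (valid because $f|_{2-k}W_N=cf$), the functional equation \eqref{lDf} for $L_{f_1}$ (valid by the assumed Fricke behaviour of $f_1$), and the compatibility relation \eqref{comp} that ties $L_{f_1}$ back to $L_f$ through the operator $\alpha_D$ of \eqref{alpha}. The crucial structural point I would rely on is that \eqref{comp} holds for \emph{every} character modulo $D$ and that $\alpha_D$ depends only on $D$ (and on the fixed function $h$), so it can be inserted on both sides of \eqref{lDf} whether the twisting character is $\chi$ or $\overline\chi\psi_D$, the latter again being a character modulo $D$.

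First I would rewrite \eqref{lDf} entirely in terms of $L_f$. Applying \eqref{comp} with character $\chi$ to its left-hand side gives $L_{f_1}(\chi,\varphi)=L_f(\chi,\alpha_D(\varphi))$, while applying it with character $\overline\chi\psi_D$ to its right-hand side gives $L_{f_1}(\overline\chi\psi_D,\varphi|_{2-k}W_{N'})=L_f(\overline\chi\psi_D,\alpha_D(\varphi|_{2-k}W_{N'}))$. This converts \eqref{lDf} into an identity relating $L_f(\chi,\alpha_D(\varphi))$ to $L_f(\overline\chi\psi_D,\alpha_D(\varphi|_{2-k}W_{N'}))$, carrying the explicit constant of \eqref{lDf}. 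Next I would process the left-hand side $L_f(\chi,\alpha_D(\varphi))$ using the functional equation \eqref{lf} for $f$ itself, taking the test function there to be $\alpha_D(\varphi)$; this replaces it by a constant multiple of $L_f(\overline\chi\psi_D,\alpha_D(\varphi)|_kW_N)$.

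At this stage both sides carry the twisting character $\overline\chi\psi_D$ and a common prefactor $i^kc/\varepsilon_D$. Dividing through by this prefactor, and using linearity of $L_f$ in its test-function slot to pull a scalar inside the argument, yields \eqref{nec} with $b$ equal to the ratio of the two remaining constants coming from \eqref{lDf} and \eqref{lf}.

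The only genuine work is verifying that this ratio collapses to the closed form \eqref{b}, and this is where I expect the bookkeeping to be most delicate. I would collect the powers of $N,N'$ into $(NN')^{-k/2}N'$, combine $\chi(-N')/\chi(-N)=\chi(N'/N)$, and reduce the $\psi_D$ contribution using that $\psi_D$ is real (so $\psi_D(m)^{-1}=\psi_D(m)$), that $2k$ is odd whence $(-1)^{2k}=-1$, and that $(-1)^{3/2-k}=-(-1)^{k-1/2}$. This last sign identity is precisely what merges the two separate contributions $\psi_D((-1)^{k-1/2})$ from \eqref{lDf} and $\psi_D((-1)^{3/2-k})$ from \eqref{lf} into the single factor $\psi_D((-1)^{2k})=\psi_D(-1)$ recorded in \eqref{b}. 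Tracking these signs, together with the correct placement of $\lambda$ and of $\psi'(D)/\psi(D)$, is the main obstacle, but it is entirely mechanical once the three equations are correctly aligned.
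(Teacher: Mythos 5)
Your proposal is correct and follows essentially the same route as the paper's own derivation: the paper likewise uses \eqref{comp} (valid for any character mod $D$, hence both for $\chi$ and for $\overline\chi\psi_D$) to rewrite \eqref{lDf} entirely in terms of $L_f$, then applies \eqref{lf} with test function $\alpha_D(\varphi)$ to the left-hand side, cancels the common factor $i^k c/\varepsilon_D$, and absorbs the resulting scalar into the test-function slot by linearity. Your constant bookkeeping also checks out, including the key sign merge $\psi_D\bigl((-1)^{k-\frac12}\bigr)\psi_D\bigl((-1)^{\frac32-k}\bigr)=\psi_D(-1)=\psi_D\bigl((-1)^{2k}\bigr)$ and the use of the realness of $\psi_D$ to recover \eqref{b}.
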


\subsection{A sufficient condition for a positive answer to Question \ref{Question13}} 

\begin{proposition} Let $k \in \frac{1}{2}+\N,$ $\lambda \in \C,$ $N, N' \in \N$ and $\psi, \psi'$ Dirichlet characters modulo $N$ and $N'$ respectively.
Suppose that there is a $h: \R \to \C$ such that, for all smooth compactly supported $\varphi$ on $\R$ and all $\chi \bmod D$ ($(D, NN')=1$) we have,
for all $p \in \R$, 
\begin{multline}\label{SC}b \left ( \frac{Dp}{2 \pi} \right )^{k-1}h\left (\frac{Dp}{2 \pi} \right ) \mathcal L \left ( \varphi\left ( \frac{1}{Nx}\right )(Nx)^{k-2}\right )(p)\\=
\mathcal L \left ( (Nx)^{-k} \mathcal L^{-1}\left (\left ( \frac{Dp}{2 \pi} \right )^{k-1} h \left ( \frac{Dp}{2 \pi} \right )(\mathcal L \varphi)(p)\right ) \left ( \frac{1}{Nx}\right )\right )(p)
\end{multline}
where $b$ is given by \eqref{b} for some $\lambda \in \C$.
Then, if $f \in M_{2-k}^!(N, \psi)$ with Fourier expansion \eqref{FourExmga} and such that $f|_{2-k}W_N=cf,$ for some $c \in \C,$ 
then the function $f_1$ given by 
\begin{equation}\label{D^{k-1}a}f_1(z):=\sum_{n\gg-\infty}c_n  n^{k-1}h(n)q^n\end{equation}
belongs to $M_{k}^!(N', \psi')$ and $c\lambda f_1=f_1|_k W_{N'}$.
\end{proposition}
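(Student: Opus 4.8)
The plan is to reduce the statement to the converse theorem, Theorem~\ref{thm:CT1}, applied at level $N'$ with character $\psi'$ to the pair of functions $f_1$ and $f_2:=c\lambda f_1$. For that pair, the conclusion of Theorem~\ref{thm:CT1} is precisely the two assertions we seek: that $f_1\in M^!_k(N',\psi')$ and that $f_1|_kW_{N'}=f_2=c\lambda f_1$. Hence the whole task collapses to verifying the single hypothesis of that theorem, namely the functional equation
\[
L_{f_1}(\chi,\varphi)=i^k\psi_D(-1)^{k-\frac{1}{2}}\psi_D(N')\frac{\chi(-N')\psi'(D)}{\epsilon_D(N')^{k/2-1}}L_{f_2}(\overline{\chi}\psi_D,\varphi|_{2-k}W_{N'})
\]
for all admissible $D$, all $\chi\bmod D$, and all smooth compactly supported $\varphi$.

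To produce this functional equation I would chain the three ingredients assembled before the statement. First, identity \eqref{comp} transports the $L$-series of $f_1$ back to that of $f$ at the cost of the test function $\alpha_D(\varphi)$, giving $L_{f_1}(\chi,\varphi)=L_f(\chi,\alpha_D(\varphi))$. Second, because $f|_{2-k}W_N=cf$, the self-functional-equation \eqref{lf} for $f$ (applied with test function $\alpha_D(\varphi)$) rewrites $L_f(\chi,\alpha_D(\varphi))$ as a constant multiple of $L_f(\overline{\chi}\psi_D,\alpha_D(\varphi)|_kW_N)$. Third --- and this is the role of the hypothesis --- equation \eqref{SC}, upon inverting the Laplace transform on both sides, is exactly the pointwise identity of test functions $\alpha_D(\varphi)|_kW_N=b\,\alpha_D(\varphi|_{2-k}W_{N'})$, i.e. the sufficient, pointwise strengthening of the necessary relation \eqref{nec}. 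Substituting this identity and then running \eqref{comp} in reverse (now with the character $\overline{\chi}\psi_D$, also modulo $D$, and test function $\varphi|_{2-k}W_{N'}$) turns $L_f(\overline{\chi}\psi_D,\alpha_D(\varphi|_{2-k}W_{N'}))$ into $L_{f_1}(\overline{\chi}\psi_D,\varphi|_{2-k}W_{N'})$. The upshot is a functional equation of the shape $L_{f_1}(\chi,\varphi)=(\mathrm{const})\,L_{f_1}(\overline{\chi}\psi_D,\varphi|_{2-k}W_{N'})$.

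It then remains to check that this accumulated constant is the one demanded by Theorem~\ref{thm:CT1} for the pair $(f_1,c\lambda f_1)$. This is a bookkeeping computation: multiplying the constant of \eqref{lf} by the factor $b$ of \eqref{b}, and using $L_{c\lambda f_1}=c\lambda L_{f_1}$, one collapses the $\psi_D$-factors through $(-1)^{3/2-k}(-1)^{2k}=(-1)^{k-\frac{1}{2}}$, the Kronecker twists through $\chi(-N)\chi(N'/N)=\chi(-N')$, the nebentypus factors through $\psi(D)\psi'(D)/\psi(D)=\psi'(D)$, and the powers of the levels through $N^{k/2}(NN')^{-k/2}N'=(N')^{1-k/2}$. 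These combine to reproduce exactly $i^k c\lambda\,\psi_D(-1)^{k-\frac{1}{2}}\psi_D(N')\chi(-N')\psi'(D)/(\epsilon_D(N')^{k/2-1})$; this is in fact the calculation that dictated the definition \eqref{b} of $b$. After a preliminary verification that the coefficients $c_nn^{k-1}h(n)$ of $f_1$ obey the bound $O(e^{C\sqrt{|n|}})$ required by Theorem~\ref{thm:CT1} --- which follows from the subexponential growth of the weakly holomorphic coefficients $c_n$ so long as $h$ grows at most polynomially --- the converse theorem delivers the two conclusions.

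The step I expect to demand the most care is the analytic justification of the third ingredient. One must confirm that $\alpha_D(\varphi)$, defined through an inverse Laplace transform, is a genuine test function lying in the classes $\scrF_f$ and $\scrF_{f_1}$, so that \eqref{comp} and \eqref{lf} may legitimately be applied to it, and that \eqref{SC}, read as an equality of Laplace transforms for all $p$, truly inverts to the claimed equality of test functions under the slash actions $|_kW_N$ and $|_{2-k}W_{N'}$ (here one must also keep straight which level decorates which factor, consistently with \eqref{nec}). A secondary subtlety is the supply of twists: \eqref{lf} and \eqref{SC} are available only for $D$ coprime to $NN'$, whereas Theorem~\ref{thm:CT1} asks for all $D$ coprime to $N'$ in a fixed range; when $N\mid N'$ (the expected situation for a level-raising operator) the two conditions coincide, but in general one should check that enough characters are at hand to invoke the converse theorem.
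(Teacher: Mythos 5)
Your proposal is correct and follows essentially the same route as the paper's proof: read \eqref{SC} as the test-function identity $\alpha_D(\varphi)|_kW_N=b\,\alpha_D(\varphi|_{2-k}W_{N'})$, chain it with \eqref{comp} and \eqref{lf} to obtain the functional equation for the pair $(f_1,\,c\lambda f_1)$, and invoke Theorem~\ref{thm:CT1}. Your explicit bookkeeping of the constants (which the paper leaves implicit in the definition of $b$) and your remarks on the growth bound and the admissible moduli $D$ are consistent with, and slightly more careful than, the paper's own argument.
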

\begin{proof} We first observe that, by the definition of $\alpha_D,$ \eqref{SC} implies 
\begin{equation}\label{AlphaIntertwiningsuff}
b\alpha_D\left(\varphi |_{2-k} W_{N'}\right)=
\alpha_D\left(\varphi \right ) |_{k}W_{N},
\end{equation}
By construction, we have $L_{f_1}(\chi,\varphi)=L_f(\chi,\alpha_D(\varphi))$. Further, since $f \in M_{2-k}^!(N, \psi)$ and $f|_{2-k}W_N=cf,$ Theorem \ref{DThalf} implies \eqref{lf}. Therefore,
\begin{equation*}
L_{f_1}(\chi,\varphi)=i^k c\psi_D((-1)^{\frac32-k}N)\frac{N^{\frac k2}\chi(-N)\psi(D)}{\varepsilon_D}L_f(\overline{\chi}\psi_D, \alpha_D(\varphi)|_{k}W_N).
\end{equation*}
Then, \eqref{AlphaIntertwiningsuff} implies that this equals
\begin{equation}\label{interm}
i^k c\lambda\frac{\psi_D\left((-1)^{k-\frac12}N'\right)\chi(-N')\psi'(D)}{\varepsilon_D(N')^{\frac{k}{2}-1}}
L_f(\overline{\chi}\psi_D, \alpha_D(\varphi|_{2-k}W_{N'})).
\end{equation}
The last term of \eqref{interm} equals $L_{f_1}(\overline{\chi}\psi_D, \varphi|_{2-k}W_{N'})$ and thus we have
$$L_{f_1}(\chi,\varphi)=i^k\frac{\psi_D\left((-1)^{k-\frac12}N'\right)\chi(-N')\psi'(D)}{\varepsilon_D(N')^{\frac{k}{2}-1}}L_{c\lambda f_1}(\overline{\chi}\psi_D, \varphi|_{2-k}W_{N'}).$$ Then Theorem \ref{thm:CT1} implies that
$f_1 \in M_{k}^!(N', \psi')$ and that 
$c\lambda f_1=f_1|_k W_{N'}$.
\end{proof}
This proposition implies that ``solving the equation \eqref{SC} in $h$" would give an affirmative answer to Question \ref{Question13}.

In \cite{DLRR} (Proposition 5.5) we show that this approach works in the case of integral weight, with map $h \equiv 1$. The stumbling block to transferring this to the case of half-integral weight is that some Laplace transform identities crucial in the integral weight case do not hold or lead to infinite sums in the half-integral case. Therefore, ``solving the equation \eqref{SC} in $h$" is harder. 

For example, in the case of integral weights, the proof of the analogue of \eqref{SC} with $h \equiv 1$ hinges, in a sense, on with the simple relation $J_{-k}(z)=(-1)^kJ_k(z)$ satisfied by the J-Bessel functions when $k$ is integer.  This relation does not hold for $k \not \in \Z$, but the explicit expressions for $J_{-k}$ and $J_k$ do exhibit some similarities. Specifically, for each $n \in \N,$
we have (\cite[10.47(ii), 10.49(iii)]{NIST}):
\begin{equation}\label{JB}
\begin{aligned}
J_{n+\frac{1}{2}}(z)&=\sqrt{\frac{2}{\pi}}z^{n+\frac{1}{2}}\left (-\frac{1}{z} \frac{d}{dz} \right )^n\left ( \frac{\sin z}{z} \right ),
\\
J_{-n-\frac{1}{2}}(z)&=(-1)^n\sqrt{\frac{2}{\pi}}z^{n+\frac{1}{2}}\left (-\frac{1}{z} \frac{d}{dz} \right )^n\left ( \frac{\cos z}{z} \right ).
\end{aligned}
\end{equation}
This  more complicated pattern may, on the one hand, account for the difficulty in extending the method of proving \cite[Proposition 5.5]{DLRR} to the case of half-integral weight and, on the other, give hope that a ``solution in $h$" of \eqref{SC} may exist.

We might perhaps complete this picture by pointing out that, for $k \not \in \frac{1}{2}\Z$, there is, as far as we are aware, no recognisable relation between $J_k$ and $J_{-k}$. This could be viewed as consistent with the expectation that no weight $k$ Bol-type operator should exist for such $k$.

\thispagestyle{empty}
{\footnotesize
\nocite{*}
\bibliographystyle{amsalpha}
\bibliography{analogue}
} 

\end{document}